\tikzset{bendy/.style={bend left=90,looseness=1.5,draw,thick}}
\tikzset{straight/.style={bend left=0,looseness=1.5,draw,thick}}
\tikzset{vert/.style={draw,circle,inner sep=1.5pt,minimum size=6pt}}
\tikzset{dvert/.style={draw,diamond,inner sep=1pt,minimum size=10pt}}
\tikzset{reality/.style={->,draw,thick,red,shorten <=-3pt,shorten >=-3pt}}
\tikzset{desire/.style={bend left=90,looseness=1.5,draw,thick,blue}}
\theoremstyle{plain}
\newtheorem{theorem}{Theorem}[section]
\newtheorem{lemma}[theorem]{Lemma}
\newtheorem{corollary}[theorem]{Corollary}
\newtheorem*{repp@theorem}{\repp@title (reformulated)}
\newcommand{\newrepptheorem}[2]{%
\newenvironment{repp#1}[1]{%
 \def\repp@title{#2 \ref{##1}}%
 \begin{repp@theorem}}%
 {\end{repp@theorem}}}
\theoremstyle{definition}
\newtheorem{definition}[theorem]{Definition}
\newtheorem*{rep@theorem}{\rep@title \ continued}
\newcommand{\newreptheorem}[2]{%
\newenvironment{rep#1}[1]{%
 \def\rep@title{#2 \ref{##1}}%
 \begin{rep@theorem}}%
 {\end{rep@theorem}}}
\newcommand{\open}{\mathcal{O}}
\newcommand{\dense}{\mathcal{D}}
\newcommand{\Dense}{\mathfrak{O}}
\newcommand{\reals}{\mathbb{R}}
\newcommand{\sone}{{\sf S}_1}
\newcommand{\sfin}{{\sf S}_{fin}}
\newcommand{\gone}{{\sf G}_1}
\newcommand{\gfin}{{\sf G}_{fin}}
\newcommand{\T}{\textsf{T}}
\title{Selective versions of $\theta$-density} 
\author[Babinkostova]{L. Babinkostova}
\address{Department of Mathematics, Boise State University, Boise, Idaho, U.S.A.}
\author[Pansera]{B.A. Pansera}
\address{Decisions Lab, Mediterranean University of Reggio Calabria, Italy}
\author[Scheepers]{ and M. Scheepers}
\address{Department of Mathematics, Boise State University, Boise, Idaho, U.S.A.}
\subjclass[2010]{03E17, 54D10, 54D20, 54D65, 54D90, 91A05, 91A44}
\keywords{Density, Tightness, Selection Principle, Game, Cardinal Number}
\begin{document}
\maketitle
\vspace{-0.25in}

\begin{abstract}
In \cite{CasDiMaio} the authors initiate the study of selective versions of the notion of $\theta$-separability  
in non-regular spaces. In this paper we continue this investigation by establishing connections between the familiar cardinal numbers arising in the set theory of the real line, and game-theoretic assertions regarding $\theta$-separability. 
\end{abstract}

\section*{Introduction}

The topological notions 
considered here are related to an operation introduced by Velichko in \cite{Velichko}. This operation is reminiscent of, but is not
, a closure operation. To define the operation, let $(X,\tau)$ be a topological space and let $A$ be a subset of $X$:
\begin{center}
$cl_{\theta}(A) = \{x\in X :$ for every neighborhood $U$ of $x,\; \overline{U}\cap A \neq\emptyset\}$
\end{center}
Here, $\overline{U}$ denotes the closure of $U$ in the topology $\tau$ of $X$. 
The set $cl_{\theta}(A)$ is a closed set in the topology $\tau$. 
Thus the family of sets of the form $cl_{\theta}(A)$ in a topological space is a subset of the collection of closed sets of that space. 
The set $cl_{\theta}(A)$ is said to be the $\theta$-\emph{closure} of $A$ even though the operation $cl_{\theta}$ is not in general idempotent and thus not a conventional closure operator.  
For a set $A\subseteq X$ the inclusion $\overline{A} \subseteq  cl_{\theta}(A)$ holds. 

A subset of $X$ on which the operator $cl_{\theta}$ is idempotent,\emph{ i.e.}, a set $A\subseteq X$ for which $A = cl_{\theta}(A)$, is called $\theta$-\emph{closed}. By our earlier remarks, $\theta$-closed sets are closed in the ambient topology $\tau$. 
The set $\{X\setminus A: A \mbox{ is $\theta$-closed}\}$, a subset of the collection of open sets of $(X,\tau)$, is a topology on $X$, called the $\theta$ topology and denoted $\tau_{\theta}$. The space $(X,\tau)$ is a $\textsf{T}_3$-space if, and only if, $\tau = \tau_{\theta}$. While each space in this paper is assumed to be a $\textsf{T}_1$-space which, unless specified otherwise, has no isolated points, no space is \emph{a priori} assumed to be $\textsf{T}_3$. Thus, for spaces considered in this paper 
$\tau_{\theta}\subset \tau$. 

\section{The featured topological notions}

Although the concepts we consider are defined in 
\cite{CasDiMaio}, we 
introduce them here for the convenience of the reader.
A subset $D$ of the space $(X,\tau)$ is \emph{dense} if $X = \overline{D}$. A space is said to be \emph{separable} if some countable subset is dense. $D\subset X$ is said to be $\theta$-\emph{dense} if $cl_{\theta}(D) = X$. 
A space is $\theta$-\emph{separable} if it has a countable $\theta$-dense subspace. A space is \emph{strongly} $\theta$-\emph{separable} if each $\theta$-dense subset contains a countable subset which is $\theta$-dense.
In the case when a space is $\theta$-separable or separable, a stronger corresponding version of separability might be considered: 
A countable set $C\subseteq X$ is said to be \emph{groupably} dense if there is a partition $C = \bigcup_{n<\omega} C_n$, where each $C_n$ is a finite set, and for each nonempty open set $U$, for all but finitely many $n$, we have $U\cap C_n \neq \emptyset$. The notion of groupably $\theta$-dense is defined analogously: A countable $\theta$-dense set is groupably $\theta$-dense if there is a partition $C = \bigcup_{n<\omega}C_n$
of $C$ into disjoint finite sets $C_n$ such that for each nonempty open set $U$, for all but finitely many $n$ we have $\overline{U}\cap C_n \neq \emptyset$. 

Notions of \emph{countable tightness} correspond to these closure operators:
Let a point $x\in X$ be given. $X$ is said to be \emph{countably tight at} $x$ if for any set $A\subset X\setminus\{x\}$ for which $x\in\overline{A}$ there is a countable set $B\subseteq A$ such that $x\in\overline{B}$. Likewise $X$ is said to be \emph{countably $\theta$-tight at} $x$ if for any set $A\subset X\setminus\{x\}$ for which $x\in\textsf{cl}_{\theta}(A)$ there is a countable set $B\subseteq A$ such that $x\in\textsf{cl}_{\theta}(B)$. 
In the case when a space is countably $\theta$-tight, or countably tight at $x$, a stronger corresponding version of tightness might be considered: 
$X$ is \emph{groupably} countably tight at $x$ if for each $A\subset X\setminus \{x\}$ for which $x\in\overline{A}$, there is countable set $C\subseteq A$ for which there is a partition $C = \bigcup_{n<\omega}C_n$
of $C$ into disjoint finite sets $C_n$ such that for each nonempty open neighborhood $U$ of $x$, for all but finitely many $n$ we have $U\cap C_n \neq \emptyset$. Under these circumstances we say that $C$ \emph{groups tightly at} 
$x$. 
The notion of \emph{groupably} countably $\theta$ tight at $x$ is defined analogously. 

The set $A$ converges to the point $x$ if for each neighborhood $U$ of $x$ the set $A\setminus U$ is finite.  By analogy $A$ $\theta$-converges to $x$ if for each neighborhood $U$ of $x$ the set $A\setminus\overline{U}$ is finite. A space is \emph{Frech\`et-Urysohn} if: For each subset $A$ of $X$, if $x$ is in the closure of $A$, then there is a sequence in $A$ converging to $x$.
The $\theta$ analogue of this notion as follows: For each subset $A$ of $X$, and for each $x$ in the $\theta$-closure of $A$ there is a sequence in $A$ $\theta$-converging to $x$.

The following symbols will denote the families related to the notions just introduced:
\begin{itemize}
\item{$\Dense = \{A\subseteq X: A \mbox{ dense in } X\}$}
\item{$\Dense^{gp} =\{A\subseteq X: A \mbox{ is groupably dense}\}$}
 \item{$\Dense_{\theta} = \{A\subseteq X: A\; \theta\mbox{-dense in }X\}$}
\item{$\Dense_{\theta}^{gp} = \{A\subseteq X: A \mbox{ is groupably $\theta$-dense}\}$} 
\item{$\Omega_x =\{A\subset X\setminus\{x\}:\; x \in\textsf{cl}(A)\}$.}
\item{$\Omega^{gp}_x = \{A\subset X\setminus\{x\}: A \mbox{ groups tightly at } x\}$.}
 \item{$\Omega^{\theta}_x = \{A\subset X\setminus\{x\}:\; x\in \textsf{cl}_{\theta}(A)\}$}
\item{$\Omega^{\theta;\;gp}_x = \{A\subseteq X:\; A \mbox{ groups $\theta$-tightly at }x\}$.}
\item{$\Gamma_x = \{A\subset X\setminus \{x\}: A \mbox{ converges to }x\}$.}
\item{$\Gamma^{\theta}_x = \{A\subset X\setminus \{x\}: A\; \theta-\mbox{converges to }x\}$.}
\end{itemize}
These families are related to each other, and these relationships are depicted in Figure \ref{fig:closurerelations}.

\begin{figure}[h]
\begin{tikzpicture}

[scale=.5]
{
  \node[dvert] (01) at (4.5,2.6) {$\Omega^{\theta}_x$}; 
  \node[dvert] (78) at (1.5,2.6) {$\Omega_x$};
  \node[dvert] (12) at (4.5,0) {$\Omega^{gp,\theta}_x$} ;
  \node[dvert] (67) at (1.5,0) {$\Omega_x^{gp}$}; ;
  \node[dvert] (23) at (4.5,-2.6) {$\Gamma^{\theta}_x$} ;
  \node[dvert] (56) at (1.5,-2.6) {$\Gamma_x$} ;

  \node[dvert] (34) at (-3.5,0.5) {$\Dense$}; 
  \node[dvert] (45) at ( -0.5,0.5) {$\Dense_{\theta}$} ;

  \node[dvert] (89) at (-3.5,-2.2) {$\Dense^{gp}$}; 
  \node[dvert] (90) at ( -0.5,-2.2) {$\Dense^{gp}_{\theta}$} ;

}

{{\path
    (01) edge[thick,<-] (12);}
{\path
    (12) edge[thick,<-] (67);}
{\path
    (01) edge[thick, <-] (78);}
    {\path
    (67) edge[thick,->] (78);}
    {\path
    (23) edge[thick,<-] (56);}
    {\path
    (67) edge[thick,<-] (56);}
    {\path
    (23) edge[thick,->] (12);}

   {\path
    (34) edge[thick,->] (45);}
    {\path
    (34) edge[thick,dashed,->] (78);}
    {\path
    (45) edge[thick,dashed,->] (01);}
    {\path
    (89) edge[thick,->] (34);}
    {\path
    (89) edge[thick,->] (90);}
    {\path
    (90) edge[thick,->] (45);}
    {\path
    (89) edge[thick,dashed,->] (67);}
    {\path
    (90) edge[thick,dashed,->] (12);}
}
\end{tikzpicture}
\caption{An arrow (dashed or solid) from $A$ to $B$ indicates $A$ is contained in $B$. In $\textsf{T}_3$ spaces, but not in $\textsf{T}_{2.5}$ spaces, the left and right faces coincide.}\label{fig:closurerelations}
\end{figure}
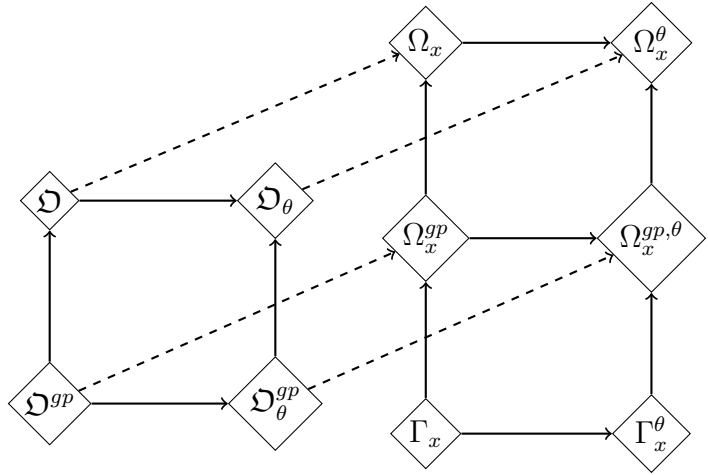

A subset $Y$ of a space $X$ is \emph{discrete} if there is for each $y\in Y$ a neighborhood $U$ of $y$ such that $U\cap Y = \{y\}$. 
There are more families of sets that will be needed in this paper, but these and notation for them will be introduced where needed.

\section{Selection Principles}

We now state the 
selection principles 
featured in this paper:  
For families $\mathcal{A}$ and $\mathcal{B}$ define:
\begin{quote}
$\sone(\mathcal{A},\mathcal{B})$: For each sequence $(A_n:n<\omega)$ of elements of $\mathcal{A}$ there is a sequence $(b_n:n<\omega)$ such that for each $n$  
$b_n\in A_n$ and $\{b_n:n<\omega\}\in\mathcal{B}$.
\end{quote}
\begin{quote}
$\sfin(\mathcal{A},\mathcal{B})$: For each sequence $(A_n:n<\omega)$ of elements of $\mathcal{A}$ there is a sequence $(B_n:n<\omega)$ such that for each $n$ we have $B_n\subseteq A_n$ is finite and $\bigcup\{B_n:n<\omega\}\in\mathcal{B}$.
\end{quote}
These selection principles are anti-monotonic in the first variable and monotonic in the second, and related to each other as depicted in the Figure \ref{fig:monotonicity}. 

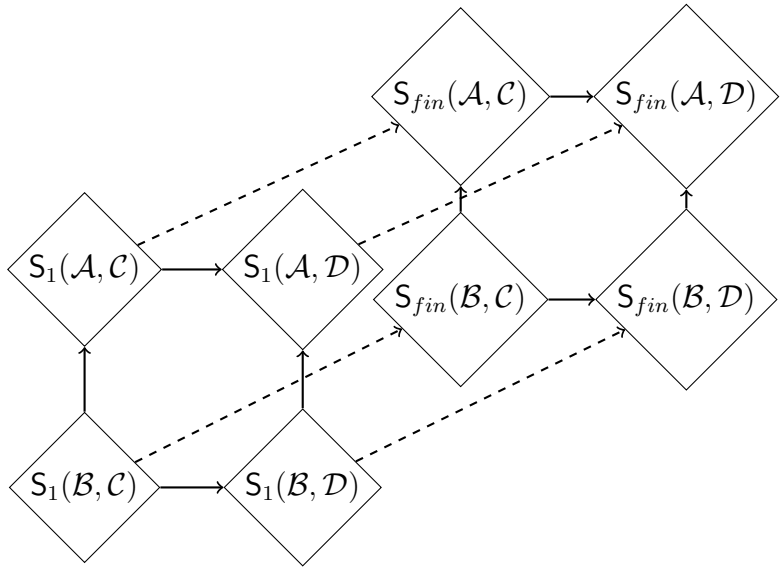
\begin{figure}[h]
\begin{tikzpicture}

[scale=.45]
{
  \node[dvert] (34) at (-0.5,0.5) {$\textsf{S}_{fin}(\mathcal{A},\mathcal{C})$};   
  \node[dvert] (45) at ( 2.5,0.5) {$\textsf{S}_{fin}(\mathcal{A},\mathcal{D})$} ;

  \node[dvert] (89) at (-0.5,-2.2) {$\textsf{S}_{fin}(\mathcal{B},\mathcal{C})$};  
  \node[dvert] (90) at ( 2.5,-2.2) {$\textsf{S}_{fin}(\mathcal{B},\mathcal{D})$} ;
  \node[dvert] (91) at (-5.5,-1.8) {$\textsf{S}_{1}(\mathcal{A},\mathcal{C})$};   
  \node[dvert] (92) at ( -2.6,-1.8) {$\textsf{S}_{1}(\mathcal{A},\mathcal{D})$} ;

  \node[dvert] (93) at (-5.5,-4.7) {$\textsf{S}_{1}(\mathcal{B},\mathcal{C})$};  
  \node[dvert] (94) at ( -2.6,-4.7) {$\textsf{S}_{1}(\mathcal{B},\mathcal{D})$} ;
}

{
    {\path
    (34) edge[thick,->] (45);}
    {\path
    (89) edge[thick,->] (34);}
    {\path
    (89) edge[thick,->] (90);}
    {\path
    (90) edge[thick,->] (45);}

    {\path
    (91) edge[thick,->] (92);}
    {\path
    (93) edge[thick,->] (94);}
    {\path
    (93) edge[thick,->] (91);}
    {\path
    (94) edge[thick,->] (92);}

    {\path
    (91) edge[thick,dashed,->] (34);}
    {\path
    (92) edge[thick,dashed,->] (45);}
    {\path
    (93) edge[thick,dashed,->] (89);}
    {\path
    (94) edge[thick,dashed,->] (90);}

}
\end{tikzpicture}
\caption{An arrow (dashed or solid) from $A$ to $B$ indicates $A$ implies $B$. This figure depicts the relationships when $\mathcal{A}\subset \mathcal{B}$ and $\mathcal{C}\subset\mathcal{D}$.} \label{fig:monotonicity}
\end{figure}

\section{Games}

For given families $\mathcal{A}$ and $\mathcal{B}$ of sets  we consider the following games of length $\omega$ between players ONE and TWO:
\begin{quote}
$\gone(\mathcal{A},\mathcal{B})$: In inning $n<\omega$ ONE chooses $O_n\in\mathcal{A}$, and TWO responds with a $T_n\in O_n$. The play
\[
  O_0,\; T_0,\; O_1,\; T_1,\; \cdots
\] 
is won by TWO if $\{T_n:n<\omega\}\in\mathcal{B}$. Otherwise, ONE wins.
\end{quote}
\begin{quote}
$\gfin(\mathcal{A},\mathcal{B})$: In inning $n<\omega$ ONE chooses $O_n\in\mathcal{A}$, and TWO responds with a finite $T_n\subseteq O_n$. The play
\[
  O_0,\; T_0,\; O_1,\; T_1,\; \cdots
\] 
is won by TWO if $\bigcup\{T_n:n<\omega\}\in\mathcal{B}$. Otherwise, ONE wins.
\end{quote}

If in one of these games player ONE has no winning strategy, then the corresponding selection principle holds. Under special circumstances the converse could also hold. Sections 5 and 6 report on some such circumstances. The selection principle itself rarely implies that TWO has a winning strategy in the game. In Section 7 we report on some circumstances under which player TWO has a winning strategy.

\section{A refined topology for certain $\textsf{T}_3$-spaces}\label{sec:cocountable}

Next, we formalize a construction that has been used in a number of \emph{ad-hoc} examples in the literature. 
For the remainder of this section let $(X,\tau)$ be a topological space in which each nonempty open set is uncountable. Define the finer topology $\tau_c$ generated by basic open sets of form $U\setminus C$ where $U\in\tau$ and $C$ is countable. For a $\tau_c$-basic open set $V$, write $V = U\setminus C$ with $U\in\tau$ and $C\subseteq X$ countable.

\begin{lemma}\label{lemma:openclosure}
The $\tau_c$-closure of $V$ is equal to the $\tau$ closure of $U$.
\end{lemma}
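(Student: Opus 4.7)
The plan is to prove the two inclusions $\overline{V}^{\tau_c} \subseteq \overline{U}^{\tau}$ and $\overline{U}^{\tau} \subseteq \overline{V}^{\tau_c}$ separately, where $V = U \setminus C$ with $U \in \tau$ and $C$ countable.

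For the first inclusion, I would simply observe that $\tau \subseteq \tau_c$ by construction (taking $C = \emptyset$ in the basic sets), so every $\tau$-closed set is $\tau_c$-closed, hence $\overline{W}^{\tau_c} \subseteq \overline{W}^{\tau}$ for any $W \subseteq X$. Combined with the monotonicity of closure and the containment $V \subseteq U$, this gives $\overline{V}^{\tau_c} \subseteq \overline{V}^{\tau} \subseteq \overline{U}^{\tau}$.

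The other direction is where the uncountability hypothesis on $(X,\tau)$ enters. Fix $x \in \overline{U}^{\tau}$ and an arbitrary $\tau_c$-basic open neighborhood $W \setminus D$ of $x$, where $W \in \tau$ and $D \subseteq X$ is countable. To show $x \in \overline{V}^{\tau_c}$, it suffices to verify
\[
(W \setminus D) \cap V = (W \cap U) \setminus (D \cup C) \neq \emptyset.
\]
Since $x \in W$ and $x \in \overline{U}^{\tau}$, the set $W \cap U$ is a nonempty $\tau$-open subset of $X$, and by our standing assumption in this section, it is uncountable. Because $D \cup C$ is the union of two countable sets and hence countable, the set $(W \cap U) \setminus (D \cup C)$ is nonempty, completing the argument.

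The proof is essentially bookkeeping; the only real content is the reduction to the inequality $(W \cap U) \setminus (D \cup C) \neq \emptyset$, and the observation that the uncountability of nonempty $\tau$-open sets is exactly the hypothesis that makes this step go through. There is no subtle obstacle to anticipate — the lemma is really just a compatibility statement between $\tau$ and $\tau_c$ under the uncountability assumption.
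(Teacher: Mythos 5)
Your proposal is correct and follows essentially the same route as the paper: the substantive inclusion $\overline{U}^{\tau}\subseteq\overline{V}^{\tau_c}$ is proved by the identical cardinality argument (a nonempty $\tau$-open set $W\cap U$ is uncountable, so removing the countable set $D\cup C$ leaves it nonempty), and the easy inclusion is the same observation that $\tau\subseteq\tau_c$, which you package as ``finer topologies have smaller closures'' while the paper argues directly with neighborhoods. No gap; nothing further is needed.
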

\begin{proof}
 First consider a point $x$ in the $\tau$-closure of $U$. Consider a $\tau_c$ neighborhood $W$ of $x$. Now $W$ is of the form $V\setminus A$ where $V$ is a $\tau$-neighborhood of $x$ and $A$ is a countable set not containing $x$. Since $V\cap U$ is an open set containing $x$ it is uncountable. and $U\cap W$ is uncountable. But then $U\cap V\setminus (A\cup C)$ is an uncountable set contained in $(U\setminus C) \cap (V\setminus A)$.  It follows that $W\cap (U\setminus C)$ is nonempty. Since $W$ was an arbitrary $\tau_c$ neighborhood of $x$, it follows that $x$ is in the $\tau_c$ closure of $V$. Thus, each member of the $\tau$-closure of $U$ is in the $\tau_c$-closure of $V$.

Next, consider any point $y$ in the $\tau_c$-closure of $V$. Any $\tau$-neighborhood of $y$ is also a $\tau_c$ neighborhood of $y$, and thus has uncountable intersection with $U$, implying that $y$ is in the $\tau$-closure of $U$.
\end{proof}

\begin{lemma}\label{relatetheta}
If $(X,\tau)$ is a $\textsf{T}_3$-space,  
then $(\tau_c)_{\theta} = \tau$. 
\end{lemma}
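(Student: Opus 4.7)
The plan is to prove the equality by verifying both inclusions $\tau\subseteq (\tau_c)_{\theta}$ and $(\tau_c)_{\theta}\subseteq\tau$, using Lemma \ref{lemma:openclosure} as the central computational tool (so that $\tau_c$-closures of basic sets coincide with $\tau$-closures of their $\tau$-open parts) and invoking $\textsf{T}_3$-regularity only to produce shrinking neighborhoods for the first inclusion.

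For $\tau\subseteq (\tau_c)_{\theta}$, I would fix $V\in\tau$ and show that $X\setminus V$ is $\theta$-closed in $\tau_c$. So take any $x\in V$; by $\textsf{T}_3$ there is $U\in\tau$ with $x\in U\subseteq \overline{U}^{\tau}\subseteq V$. Since $\tau\subseteq\tau_c$, the set $U$ is a $\tau_c$-neighborhood of $x$, and Lemma \ref{lemma:openclosure} (applied with the countable set empty) gives $\overline{U}^{\tau_c}=\overline{U}^{\tau}\subseteq V$. Thus $x\notin \textsf{cl}_{\theta}^{\tau_c}(X\setminus V)$, so $X\setminus V$ is $\theta$-closed in $\tau_c$, as required.

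For $(\tau_c)_{\theta}\subseteq\tau$, I would fix $W\in(\tau_c)_{\theta}$ and any $x\in W$, and produce a $\tau$-open neighborhood of $x$ inside $W$. Because $X\setminus W$ is $\theta$-closed in $\tau_c$, some $\tau_c$-basic neighborhood $V_0=U\setminus C$ of $x$ (with $U\in\tau$, $C$ countable, $x\notin C$) satisfies $\overline{V_0}^{\tau_c}\subseteq W$. Lemma \ref{lemma:openclosure} then identifies $\overline{V_0}^{\tau_c}$ with $\overline{U}^{\tau}$, so $U\subseteq \overline{U}^{\tau}\subseteq W$ and $x\in U\in\tau$. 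Hence $W$ is a $\tau$-neighborhood of each of its points, i.e., $W\in\tau$.

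I do not expect a serious obstacle here: the work has essentially been done in Lemma \ref{lemma:openclosure}, which tells us the $\tau_c$-closure operation on $\tau_c$-basic sets ``forgets'' the countable deletion. The only point requiring care is keeping straight the role of the countable set $C$ in the basic neighborhood $U\setminus C$ (in particular that $x\in U\setminus C$ forces $x\in U$), and using $\textsf{T}_3$ exactly once, in the forward direction, to close the gap between an arbitrary $\tau$-open set and a set whose $\tau$-closure lies inside it.
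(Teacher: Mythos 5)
Your proposal is correct and takes essentially the same route as the paper: the paper proves that for every $A\subseteq X$ the $\theta$-closure of $A$ in $\tau_c$ equals the $\tau$-closure of $A$ (via Lemma \ref{lemma:openclosure} plus one application of regularity), and your two inclusions are exactly the complemented, pointwise form of the two directions of that identity. In particular, your single use of $\textsf{T}_3$ (shrinking a $\tau$-neighborhood of a point to one whose $\tau$-closure lies inside it) occurs at precisely the same spot where the paper invokes it.
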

\begin{proof}
We must show that the $\theta$-closed subsets in topological space $(X,\tau_c)$ are the closed subsets of the topological space $(X,\tau)$. Thus, let $A\subset X$ be given. 

Assume that relative to the topology $\tau_c$ we have, for a point $x\in X$ that $x\in cl_{\theta}(A)$. Thus, for such a point $x\in X$ we have for each $\tau_c$-neighborhood $U$ of $x$ that $\overline{U}\cap A\neq \emptyset$. Here $\overline{U}$ is computed relative to the topology $\tau_c$. By Lemma \ref{lemma:openclosure} and the fact that $(X,\tau)$ is $T_3$, it follows that each $\tau$-neighborhood of $x$ has nonempty intersection with $A$, whence $x$ is also in the $\tau$-closure of $A$. Thus, for every set $A\subseteq X$, the $\theta$-closure (in $\tau_c$) of $A$ is a subset of the $\tau$ closure of $A$.

Conversely, let $x$ be an element of the $\tau$-closure of $A$. Thus, for each open neighborhood $U\in\tau$ of $x$, the set $U\cap A$ is nonempty. But as $X$ is $T_3$ and every nonempty open subset is uncountable, for each $\tau$-neighborhood $U$ of $x$, the $\tau$-closure of $U$ meets $A$. But then by Lemma \ref{lemma:openclosure}, for each $\tau_c$ neighborhood $U\setminus C$ (where $C$ is a countable set) of $x$, the $\tau_c$ closure of $U\setminus C$ meets $A$, implying $x$ is a member of $cl_{\theta}(A)$.

Thus, if $A$ is a subset of $X$, then $A$ is $\theta$-closed (relative to the topology $\tau_c$) if, and only if, $A$ is closed in the topology $\tau$.
\end{proof}

The space $C(X)$ is especially suited to the construction above and will be used throughout the several times in this paper.
Let $(X,d)$ be a infinite separable metric space. Then $C(X)$ denotes the set of continuous real-valued functions with domain $X$. The set $C(X)$ is a subset of the product set $\Pi_{x\in X}\reals$. Endowing the latter product with the Tychonoff product topology endows the subset $C(X)$ with the subspace topology otherwise known as the point-wise topology. The symbol $C_p(X)$ denotes the topological space with underlying set $C(X)$, and with the point-wise topology. It is well-known that this space is a completely regular, and thus regular, Hausdorff space in which each nonempty open set is uncountable. The space $C_p(X)$ acts as intermediary between several selection principles. We now recall some of the relevant ones for this paper.

Recall that an open cover $\mathcal{U}$ of a space $X$ is an $\omega$-cover if $X$ is not a member of $\mathcal{U}$, and for each finite subset $F$ of $X$ there is a $U\in\mathcal{U}$ such that $F\subseteq U$. The symbol $\Omega$ is commonly used for the set of $\omega$-covers of $X$. The following two theorems will be used throughout the rest of the paper:
\begin{theorem}\label{thm:cps1}
For a separable metric space $X$ the following statements are equivalent:
\begin{enumerate}
\item{$X$ has the property $\sone(\Omega,\Omega)$}
\item{ONE has no winning strategy in the game $\gone(\Omega,\Omega)$}
\item{$C_p(X)$ has property $\sone(\Omega_{\mathbf 0},\Omega_{\mathbf 0})$}
\item{ONE has no winning strategy in the game $\gone(\Omega_{\mathbf 0},\Omega_{\mathbf 0})$}
\item{$C_p(X)$ has property $\sone(\mathfrak{D},\mathfrak{D})$}
\item{ONE has no winning strategy in the game $\gone(\mathfrak{D},\mathfrak{D})$}
\end{enumerate}
\end{theorem}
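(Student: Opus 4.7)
The plan is to establish the six equivalences in three layers: (a) the classical duality between $\omega$-covers of $X$ and $\Omega_{\mathbf 0}$-families in $C_p(X)$, giving (1)$\Leftrightarrow$(3); (b) the passage from the local ``closure at $\mathbf 0$'' notion to the global ``dense'' notion in $C_p(X)$ using homogeneity, giving (3)$\Leftrightarrow$(5); and (c) a Pawlikowski-style ``strategy un-coding'' argument lifting each of the three selection principles to the corresponding game, giving (1)$\Leftrightarrow$(2), (3)$\Leftrightarrow$(4), (5)$\Leftrightarrow$(6).

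For layer (a) I set up the standard Galois connection. Given an $\omega$-cover $\mathcal{U}$ of $X$ by cozero sets, for each $U\in\mathcal{U}$ choose a continuous $f_U\colon X\to[0,1]$ with $f_U^{-1}(0)=X\setminus U$; the set $A_{\mathcal{U}}=\{f_U:U\in\mathcal{U}\}$ lies in $\Omega_{\mathbf 0}$, because any basic neighborhood of $\mathbf 0$ is determined by a finite $F\subseteq X$ and $\varepsilon>0$, and some $U\in\mathcal{U}$ contains $F$, making $f_U$ vanish on $F$. Conversely, from $A\in\Omega_{\mathbf 0}$, define $\mathcal{U}_A=\{f^{-1}(-1,1):f\in A\}$; this is an $\omega$-cover of $X$ (provided $X\notin\mathcal{U}_A$, which can be arranged). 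A straightforward check shows that an $\sone$-selector in one side transforms into an $\sone$-selector in the other, yielding (1)$\Leftrightarrow$(3).

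For layer (b), the forward direction $(5)\Rightarrow(3)$ is immediate since $\mathfrak D\subseteq\Omega_{\mathbf 0}$ after removing $\mathbf 0$. For $(3)\Rightarrow(5)$ I exploit that $C_p(X)$ is a topological group under pointwise addition, hence homogeneous, and that it is separable (being a subspace of $\reals^X$ with $X$ separable metric, so of countable weight). Fix a countable dense $\{g_k:k<\omega\}\subseteq C_p(X)$. Given a sequence $(D_n:n<\omega)$ of dense sets, partition $\omega$ into infinitely many infinite pieces $\omega=\bigsqcup_{k<\omega}I_k$ and apply $\sone(\Omega_{\mathbf 0},\Omega_{\mathbf 0})$ to the sequence $(D_n-g_k:n\in I_k)$, where $k$ is the unique index with $n\in I_k$; selecting $f_n-g_k\in D_n-g_k$ with $\{f_n-g_k:n\in I_k\}\in\Omega_{\mathbf 0}$ means $\{f_n:n\in I_k\}$ accumulates to $g_k$. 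Since the $g_k$ are dense, the selected $f_n$ form a dense subset of $C_p(X)$, giving $\sone(\mathfrak D,\mathfrak D)$.

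Finally, in layer (c) I apply Pawlikowski's theorem for (1)$\Leftrightarrow$(2): given a putative winning strategy $\sigma$ for ONE in $\gone(\Omega,\Omega)$, one enumerates plays along $\sigma$ indexed by finite sequences, extracts from the tree of moves a sequence of $\omega$-covers, applies $\sone(\Omega,\Omega)$, and reassembles a single run of $\sigma$ that TWO wins. The same scheme works verbatim for the $\Omega_{\mathbf 0}$ and $\mathfrak D$ games because $C_p(X)$ has countable network weight, so one can enumerate strategy-responses without losing the combinatorial structure. Alternatively, having already proved the topological equivalences in layers (a) and (b), one can transfer a winning strategy for ONE in $\gone(\Omega_{\mathbf 0},\Omega_{\mathbf 0})$ to one in $\gone(\Omega,\Omega)$ through the duality of layer (a), reducing everything to the single Pawlikowski theorem. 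The main obstacle, and the step requiring the most care, is layer (a): specifically, verifying that the Galois correspondence respects the \emph{selector} operation (not merely the families), which forces one to pass to cozero $\omega$-covers and to normalize the chosen functions so that the $\sone$-sequence extracted on one side maps to a genuine $\sone$-witness on the other.
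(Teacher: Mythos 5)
First, note that the paper itself contains no proof of Theorem \ref{thm:cps1}: it is assembled from the literature, with (1)$\Leftrightarrow$(3) attributed to \cite{Sakai}, the game forms (2) and (4) to \cite{COC2}, and (5), (6) to \cite{COC6}. You are therefore attempting more than the paper does, and your layer (a) --- which you correctly identify as the load-bearing step --- contains a fatal sign error. With your normalization $f_U^{-1}(0)=X\setminus U$, the function $f_U$ vanishes exactly \emph{off} $U$; hence if $F\subseteq U$ then $f_U$ is strictly positive at each point of $F$, and your sentence ``some $U\in\mathcal{U}$ contains $F$, making $f_U$ vanish on $F$'' asserts the opposite of what the construction provides. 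In fact $A_{\mathcal{U}}$ need not belong to $\Omega_{\mathbf 0}$ at all: for $X=\reals$ and $\mathcal{U}=\{(-n,n):n\geq 1\}$ one can take each $f_U$ with $f_U(0)=1$ (e.g.\ $f_{(-n,n)}(x)=\max\{0,\min\{1,n-\vert x\vert\}\}$), and then the neighborhood $\{g:\vert g(0)\vert<1\}$ of $\mathbf 0$ misses $A_{\mathcal{U}}$. The correspondence that actually transfers selectors uses the opposite normalization and a richer index set: for each pair $(U,F)$ with $F\subseteq U$ finite, choose $f_{U,F}$ equal to $0$ on $F$ and equal to $1$ on $X\setminus U$. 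Then the $\omega$-cover property of $\mathcal{U}$ yields $\{f_{U,F}\}\in\Omega_{\mathbf 0}$, and --- the feature your version destroys --- $\vert f_{U,F}\vert<1$ on a finite set $E$ forces $E\subseteq U$, which is exactly what makes a selected sequence in $C_p(X)$ project back to an $\omega$-cover of $X$. Under your convention, $f_U$ being small on $E$ is consistent even with $E\cap U=\emptyset$, so neither direction of (1)$\Leftrightarrow$(3) goes through as written.

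Two further gaps. In layer (b), $(5)\Rightarrow(3)$ is not ``immediate since $\mathfrak{D}\subseteq\Omega_{\mathbf 0}$'': the principles are anti-monotonic in the first variable, so this inclusion lets you feed \emph{dense} sequences into $\sone(\Omega_{\mathbf 0},\Omega_{\mathbf 0})$; it does not let you feed $\Omega_{\mathbf 0}$-sequences (which are generally far from dense) into $\sone(\mathfrak{D},\mathfrak{D})$. The known proof of this direction is not a monotonicity remark: it routes back through $X$, via Theorem 13 of \cite{COC6} (that $\sone(\mathfrak{D},\mathfrak{D})$ for $C_p(X)$ implies $\sone(\Omega,\Omega)$ for $X$) followed by Sakai's theorem. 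Your $(3)\Rightarrow(5)$ half --- translating by a countable dense set and splitting $\omega$ into infinitely many infinite blocks --- is correct and is the standard homogeneity argument, though your justification that $C_p(X)$ is separable is wrong as stated (its weight is $\vert X\vert$, not $\aleph_0$; separability of $C_p(X)$ for second countable $X$ is a separate classical fact). In layer (c), the sketch ``enumerate the strategy tree, apply $\sone(\Omega,\Omega)$ at the nodes, reassemble a single run'' is precisely the naive argument that fails: selections made at all nodes of the tree need not cohere along any one branch, and overcoming this obstacle is the entire, and substantial, content of Pawlikowski's theorem. Quoting that theorem is legitimate --- the paper simply cites \cite{COC2} for the same purpose --- but the claim that the scheme ``works verbatim'' for the $\Omega_{\mathbf 0}$ and $\mathfrak{D}$ games because of countable network weight is unsupported. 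The defensible plan is the alternative you name: transfer ONE's winning strategies across the dualities so that only one game theorem is invoked; but that transfer relies on layers (a) and (b) being repaired first.
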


In Theorem \ref{thm:cps1} the equivalence of (1) and (3) are proven in \cite{Sakai}, and the equivalence with (2) and (4) are given in \cite{COC2}. The equivalence with (5) and (6) is given in \cite{COC6}.

\begin{theorem}\label{thm:cpsfin}
For a separable metric space $X$ the following statements are equivalent:
\begin{enumerate}
\item{$X$ has the property $\sfin(\Omega,\Omega)$}
\item{ONE has no winning strategy in the game $\gfin(\Omega,\Omega)$}
\item{$C_p(X)$ has property $\sfin(\Omega_{\mathbf 0},\Omega_{\mathbf 0})$}
\item{ONE has no winning strategy in the game $\gfin(\Omega_{\mathbf 0},\Omega_{\mathbf 0})$}
\item{$C_p(X)$ has property $\sfin(\mathfrak{D},\mathfrak{D})$}
\item{ONE has no winning strategy in the game $\gfin(\mathfrak{D},\mathfrak{D})$}
\end{enumerate}
\end{theorem}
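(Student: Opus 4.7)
The plan is to mirror the proof of Theorem \ref{thm:cps1}, substituting $\sfin$ and $\gfin$ for $\sone$ and $\gone$ everywhere, and re-using the Sakai-type duality that translates open $\omega$-covers of $X$ into subsets of $C_p(X)$ accumulating at the constant function $\mathbf 0$. For (1)$\Leftrightarrow$(3): given $A\in\Omega_{\mathbf 0}$, the family $\{f^{-1}(-1,1) : f\in A\}$ is an $\omega$-cover of $X$, because for any finite $F\subseteq X$ the basic neighborhood $[F,1]$ of $\mathbf 0$ meets $A$; conversely, given an $\omega$-cover $\mathcal{U}$ of $X$ and, for each $U\in\mathcal{U}$ together with an assigned finite $F\subseteq U$, Tychonoff-regularity produces a continuous witness $f_{U,F}$ vanishing on $F$ and nonzero off $U$, so that the resulting family sits in $\Omega_{\mathbf 0}$. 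A sequence in $\Omega_{\mathbf 0}$ thereby induces a sequence of $\omega$-covers to which $\sfin(\Omega,\Omega)$ applies, and finite selections transport back across the correspondence to witness $\sfin(\Omega_{\mathbf 0},\Omega_{\mathbf 0})$; the argument reverses verbatim. An inning-by-inning reading of the same translation yields (2)$\Leftrightarrow$(4), where a strategy for ONE in one game lifts to a strategy for ONE in the other and TWO's finite responses pull back coherently.

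The bridge (1)$\Leftrightarrow$(2) is the $\sfin$-analogue of the Hurewicz-Menger game theorem. The direction (2)$\Rightarrow$(1) is formal: treat the given sequence of $\omega$-covers as ONE's moves against an arbitrary strategy, read off TWO's finite responses, and invoke the failure of winning. For (1)$\Rightarrow$(2), I would assume $\sigma$ is a strategy for ONE in $\gfin(\Omega,\Omega)$, enumerate the countable tree of possible finite partial plays against $\sigma$, apply $\sfin(\Omega,\Omega)$ to the $\omega$-cover $\sigma$ produces at each node, and diagonalize with a Cantor-style bookkeeping so that a single legal play of TWO defeats $\sigma$. Finally, (3)$\Leftrightarrow$(5) and (4)$\Leftrightarrow$(6) exploit the topological-group structure of $C_p(X)$: any dense $D\subseteq C_p(X)$ satisfies $\mathbf 0\in\overline{D}$, so $D\setminus\{\mathbf 0\}\in\Omega_{\mathbf 0}$, whence a selection on $\Omega_{\mathbf 0}$ delivers one on $\mathfrak{D}$ directly; for the reverse, translate any $A\in\Omega_{\mathbf 0}$ by a fixed countable dense additive subgroup $G\subseteq C_p(X)$ to build a dense set, apply the selection in $\mathfrak{D}$, and use homogeneity to pull a witness back to $\mathbf 0$.

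The main obstacle is the tree argument in (1)$\Rightarrow$(2): equipped only with the combinatorial selection principle, one must construct a single play defeating an arbitrary strategy of ONE, simultaneously running $\sfin(\Omega,\Omega)$ at every node of the strategy tree and diagonalizing so that the resulting finite responses union to an $\omega$-cover of $X$. Once this step is in place, Figure \ref{fig:monotonicity} combined with the Sakai correspondence and translation invariance closes the remaining implications without new ideas.
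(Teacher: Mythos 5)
First, note that the paper does not actually prove Theorem \ref{thm:cpsfin}: it is quoted as a known result, with (1)$\Leftrightarrow$(3) attributed to \cite{AVA1}, the game clauses (2) and (4) to \cite{COC2}, and (5), (6) to \cite{COC6}. So you are attempting to reconstruct from scratch proofs that occupy substantial parts of several papers. Your duality is indeed the right mechanism for (1)$\Leftrightarrow$(3) and for transferring strategies between the two games, modulo standard repairs you gloss over: one must scale (use $f^{-1}(-1/n,1/n)$ in the $n$-th inning) and upgrade $\sfin(\Omega,\Omega)$ so that every tail $\bigcup_{n\ge k}\mathcal{V}_n$ of the selection is again an $\omega$-cover (by partitioning the index set into infinitely many infinite pieces); also, your route for $\sfin(\mathfrak{D},\mathfrak{D})\Rightarrow\sfin(\Omega_{\mathbf 0},\Omega_{\mathbf 0})$ via the translates $A_n+G$ has a hole — density of the selected union does not obviously return points of $A_n$ near $\mathbf 0$, since you cannot control which translate a chosen point came from — and is cleaner run through $X$ itself, i.e. (5)$\Rightarrow$(1)$\Rightarrow$(3), which is how \cite{COC6} closes the loop.

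The genuine gap is (1)$\Rightarrow$(2). What you describe — enumerate the countable strategy tree, apply $\sfin(\Omega,\Omega)$ to the covers $\sigma(t)$ at its nodes, and diagonalize with Cantor-style bookkeeping — fails as stated: a single application of the selection principle to all node-covers yields finite $\mathcal{V}_t\subseteq\sigma(t)$ whose union over \emph{all} nodes $t$ is an $\omega$-cover, but an actual play of the game traverses exactly one branch of the tree (the branch is forced, since TWO's move at a node determines the next node), and the union of the $\mathcal{V}_t$ along that single branch need not be an $\omega$-cover; the finite subsets of $X$ it misses may be captured only at nodes the play never visits. This implication is precisely a Hurewicz-type game theorem, and the known proofs need genuinely more than diagonalization: in \cite{COC2} it is obtained by combining the equivalence of $\sfin(\Omega,\Omega)$ with the Menger property of all finite powers of $X$, Hurewicz's theorem \cite{WH} that the Menger property implies ONE has no winning strategy in $\gfin(\open,\open)$, and a strategy translation between the $\omega$-cover game on $X$ and the open-cover games on the powers $X^k$. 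You correctly flag this step as "the main obstacle," but flagging it is not closing it: as written, your proposal establishes the routine implications and leaves open the one hard one — exactly the part the paper outsources to the literature.
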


In Theorem \ref{thm:cpsfin} the equivalence of (1) and (3) are proven in \cite{AVA1}, and the equivalence with (2) and (4) are given in \cite{COC2}. The equivalence with (5) and (6) is given in \cite{COC6}.

\section{Density, tightness, convergence and cardinal numbers}

Recall that a family $\mathcal{F}$  of nonempty open subsets of a topological space $X$ is a $\pi$-\emph{base} if there is for each nonempty open set $U\subseteq X$ an element $V$ of $\mathcal{F}$ such that $V\subseteq U$. The cardinal number $\pi(X)$ is the minimal cardinality of a $\pi$-base of $X$, and is called the $\pi$-\emph{weight} of $X$. Following \cite{CasDiMaio} we define:
\begin{definition} A family $\mathcal{V}$ of open subsets of a space $X$ is called a $\theta - \pi$-\emph{base}  for $X$ if for any open set $U\subset X$ there is $V\in\mathcal{V}$ such that $V\subset\overline{U}$.
\end{definition}
Any topological space has a $\theta-\pi$-base. 
We define
\[
  \pi_{\theta}(X) = \min\{\vert\mathcal{V}\vert:\; \mathcal{V} \mbox{ is a } \theta -\pi \mbox{ base for }X\}.
\]
As a $\pi$-base for a topological space is also a $\theta-\pi$-base, it follows that $\pi_{\theta}(X) \le \pi(X)$.

\begin{corollary}\label{cor:cocountablepibase}
Let $(X,\tau)$ be a $\textsf{T}_3$-space in which each nonempty open set is uncountable. Then $\pi_{\theta}(X,\tau_c) = \pi(X,\tau)$.
\end{corollary}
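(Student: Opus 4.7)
The plan is to establish the two inequalities $\pi_\theta(X,\tau_c)\le \pi(X,\tau)$ and $\pi(X,\tau)\le \pi_\theta(X,\tau_c)$ separately. Both rest on Lemma \ref{lemma:openclosure} and the standing hypothesis that every nonempty $\tau$-open set is uncountable. For the reverse direction I would route through the intermediate quantity $\pi_\theta(X,\tau)$ and use the $\textsf{T}_3$ assumption to collapse it back to $\pi(X,\tau)$.

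For $\pi_\theta(X,\tau_c)\le \pi(X,\tau)$ I would take a $\pi$-base $\mathcal{B}$ for $(X,\tau)$ of minimum cardinality and check that $\mathcal{B}$ is itself a $\theta$-$\pi$-base for $(X,\tau_c)$. Each $B\in\mathcal{B}$ is $\tau_c$-open because $\tau\subseteq \tau_c$. Given a nonempty $\tau_c$-open $U$, I would pick a basic $\tau_c$-open $W\setminus C\subseteq U$ with $W\in\tau$ nonempty, and then $B\in\mathcal{B}$ with $B\subseteq W$. Lemma \ref{lemma:openclosure} yields $B\subseteq W\subseteq \overline{W}^{\tau}=\overline{W\setminus C}^{\tau_c}\subseteq \overline{U}^{\tau_c}$, as required.

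For $\pi(X,\tau)\le \pi_\theta(X,\tau_c)$ I would start with a minimum-cardinality $\theta$-$\pi$-base $\mathcal{V}$ for $(X,\tau_c)$ and, by replacing each $V\in\mathcal{V}$ with a basic $\tau_c$-open subset (which preserves the $\theta$-$\pi$-base property since $V^\ast\subseteq V\subseteq \overline{U}^{\tau_c}$), assume $V=W_V\setminus C_V$ with $W_V\in\tau$ nonempty and $C_V$ countable. Then I would show that $\mathcal{V}'=\{W_V:V\in\mathcal{V}\}$ is a $\theta$-$\pi$-base for $(X,\tau)$: for nonempty $U\in\tau$, some $V=W_V\setminus C_V$ satisfies $V\subseteq \overline{U}^{\tau_c}=\overline{U}^{\tau}$ (Lemma \ref{lemma:openclosure} with $C=\emptyset$), and since every nonempty $\tau$-open set is uncountable the set $W_V\setminus C_V$ is $\tau$-dense in $W_V$, so $W_V\subseteq \overline{W_V\setminus C_V}^{\tau}\subseteq \overline{U}^{\tau}$. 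This yields $\pi_\theta(X,\tau)\le \pi_\theta(X,\tau_c)$. To finish, I would invoke $\textsf{T}_3$: given any $\theta$-$\pi$-base $\mathcal{W}$ for $(X,\tau)$ and nonempty $U\in\tau$, regularity provides $U'\in\tau$ with $\overline{U'}^{\tau}\subseteq U$, and any $W\in\mathcal{W}$ with $W\subseteq \overline{U'}^{\tau}\subseteq U$ witnesses the $\pi$-base property. Chaining, $\pi(X,\tau)\le \pi_\theta(X,\tau)\le \pi_\theta(X,\tau_c)\le \pi(X,\tau)$.

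The main obstacle is the step passing from $W_V\setminus C_V\subseteq \overline{U}^{\tau}$ to a $\tau$-open set contained in $U$: the countable exceptional set $C_V$ prevents a direct inclusion $W_V\subseteq U$, so one only recovers $W_V\subseteq \overline{U}^{\tau}$. This is precisely why I route through $\pi_\theta(X,\tau)$ and close the loop using $\textsf{T}_3$ rather than attempting a single-shot construction of a $\tau$-$\pi$-base from $\mathcal{V}$.
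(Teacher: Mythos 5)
Your proof is correct and is essentially the paper's argument: the paper establishes the same two inequalities, first noting that any $\pi$-base of $(X,\tau)$ is a $\theta$-$\pi$-base of $(X,\tau_c)$, and then, after reducing a $\theta$-$\pi$-base of $(X,\tau_c)$ to basic sets $W\setminus C$ exactly as you do, recovering $\tau$-open sets by passing to $\textsf{int}_{\tau}(\textsf{cl}_{\tau}(W\setminus C))\supseteq W$ --- which is verified by the same combination of Lemma \ref{lemma:openclosure}, uncountability (giving $\tau$-density of $W\setminus C$ in $W$), and regularity that you employ. Your only deviation is organizational: you route the second inequality through the intermediate quantity $\pi_{\theta}(X,\tau)$ together with the observation that in $\textsf{T}_3$ spaces every $\theta$-$\pi$-base is a $\pi$-base, rather than claiming directly that the regularized family is a $\pi$-base for $(X,\tau)$.
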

\begin{proof}
Note that any $\pi$-base of $(X,\tau)$ is a $\pi_{\theta}$-base of $(X,\tau_c)$. Conversely, if $\mathcal{C}$ is a $\pi_{\theta}$-base for $(X,\tau_c)$, then we may assume that each element is of the form $U\setminus C$ where $U$ is a $\tau$-open set and $C$ a countable set in $X$. But then $\mathcal{B} = \{\textsf{int}(\textsf{cl}(V)): \;V\in \mathcal{C}\}$, where the closure and interior are with respect to the topology $\tau$, is a $\pi$-base for $X$
\end{proof}

\begin{lemma}\label{lemma:density}
Let $(X,\tau)$ be a $\textsf{T}_3$-space in which each nonempty open set is uncountable.  A set $C\subset X$ is $\theta$-dense in $(X,\tau_c)$ if, and only if, it is dense in $(X,\tau)$.
\end{lemma}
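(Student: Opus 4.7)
The statement will follow with almost no additional work, because the proof of Lemma \ref{relatetheta} actually establishes a pointwise equality: for every $A\subseteq X$, the $\theta$-closure of $A$ computed in $(X,\tau_c)$ coincides with the ordinary $\tau$-closure of $A$. (One direction of that lemma showed that if $x$ lies in the $\tau_c$-$\theta$-closure of $A$, then every $\tau$-neighborhood of $x$ meets $A$; the other direction showed that if $x$ is in the $\tau$-closure of $A$, then every $\tau_c$-neighborhood of $x$ has its $\tau_c$-closure meeting $A$.) Applying this with $A = C$, $\theta$-density of $C$ in $(X,\tau_c)$ is literally the same statement as $\tau$-density of $C$ in $X$.

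If one prefers a short self-contained argument that does not explicitly quote Lemma \ref{relatetheta}, the plan is to reduce everything to Lemma \ref{lemma:openclosure} together with regularity. For the forward implication, assume $C$ is $\tau$-dense, fix $x\in X$, and take an arbitrary basic $\tau_c$-neighborhood $W = U\setminus C'$ of $x$ (with $U\in\tau$ and $C'$ countable). By Lemma \ref{lemma:openclosure} the $\tau_c$-closure of $W$ equals $\overline{U}^{\,\tau}$. Since $X$ is $T_3$, pick a $\tau$-open $V\ni x$ with $\overline{V}^{\,\tau}\subseteq U$; $\tau$-density of $C$ gives $V\cap C\neq\emptyset$, so $\overline{W}^{\,\tau_c}\cap C\neq\emptyset$, showing $x\in \textsf{cl}_{\theta}(C)$ in $\tau_c$.

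For the reverse, assume $C$ is $\theta$-dense in $(X,\tau_c)$ and let $U\in\tau$ be any nonempty $\tau$-open set. Choose $x\in U$ and, using $T_3$, a $\tau$-open $V$ with $x\in V$ and $\overline{V}^{\,\tau}\subseteq U$. Then $V\in\tau\subseteq\tau_c$ is a $\tau_c$-neighborhood of $x$, so by $\theta$-density $\overline{V}^{\,\tau_c}\cap C\neq\emptyset$; by Lemma \ref{lemma:openclosure} this set equals $\overline{V}^{\,\tau}\subseteq U$, so $U\cap C\neq\emptyset$. This gives $\tau$-density of $C$.

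The only step requiring any genuine input beyond Lemma \ref{lemma:openclosure} is the shrinking trick $x\in V\subseteq\overline{V}^{\,\tau}\subseteq U$, which is exactly the role of the $T_3$ hypothesis; without it one cannot convert ``$\overline{U}$ meets $C$'' into ``some $\tau$-open set meets $C$,'' and the reverse implication would fail. So I anticipate no real obstacle—the proof is essentially a rephrasing of Lemma \ref{relatetheta} at the level of full sets rather than of points.
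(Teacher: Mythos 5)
Your proposal is correct and takes essentially the same route as the paper: the paper's own proof of Lemma \ref{lemma:density} is precisely your self-contained argument, applying Lemma \ref{lemma:openclosure} to basic $\tau_c$-neighborhoods and using the $\textsf{T}_3$ hypothesis to convert ``$\textsf{cl}_{\tau}(U)$ meets $C$'' into ``$U$ meets $C$'' (your first paragraph's shortcut via the pointwise closure identity established inside the proof of Lemma \ref{relatetheta} is the same computation, merely packaged as a citation of that proof rather than of its statement). One minor remark: in the direction ``dense in $\tau$ $\Rightarrow$ $\theta$-dense in $(X,\tau_c)$'' your regularity shrinking is superfluous, since density already places a point of $C$ in $U\subseteq \textsf{cl}_{\tau}(U)=\textsf{cl}_{\tau_c}(W)$; like the paper, you genuinely need $\textsf{T}_3$ only in the other direction.
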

\begin{proof}
Let a subset $C$ of $X$ be given.\\
Assume that $\textsf{cl}_{\theta}(C) = X$, in the $\tau_c$-topology. Consider a point $x\in X$, let $U$ be a $\tau$-neighborhood of $x$. For each countable set $A\subset X$ we have $\textsf{cl}_{\tau_c}(U\setminus A) \cap C\neq\emptyset$. However, $\textsf{cl}_{\tau_c}(U\setminus A) = \textsf{cl}_{\tau}(U)$, and thus $\textsf{cl}_{\tau}(U)\cap C \neq\emptyset$. Since this is the case for each $\tau$-neighborhood of $x$, and $(X,\tau)$ is $\textsf{T}_3$, it follows that for each $\tau$-neighborhood $U$ of $X$, $U\cap C\neq\emptyset$. Thus, $\textsf{cl}_{\tau}(C) = X$.

Next assume that $\textsf{cl}_{\tau}(C) = X$. Consider a point $x\in X$, and let $U\setminus A$ be a $\tau_c$-neighborhood of $x$, where now $U$ is $\tau$-open and $A\subset X$ is countable. But for each countable set $A\subset X$ we have $\textsf{cl}_{\tau}(U\setminus A) = \textsf{cl}_{\tau}(U) = \textsf{cl}_{\tau_c}(U\setminus A)$. Therefore $\textsf{cl}_{\tau_c}(U\setminus A)\cap C\neq\emptyset$.
 Since this is the case for each $\tau_c$-neighborhood of $x$, it follows that $\textsf{cl}_{\theta}(C) = X$ in the $\tau_c$-topology.
\end{proof}

\begin{corollary}\label{cor:thetadense}
Let $(X,\tau)$ be a $\textsf{T}_3$-space with all nonempty open sets uncountable.
\begin{enumerate}
\item{$(X,\tau)$ satisfies $\sone(\dense,\dense)$ if, and only if, $(X,\tau_c)$ satisfies $\sone(\dense_{\theta},\dense_{\theta})$.}
\item{$(X,\tau)$ satisfies $\sone(\dense,\dense^{gp})$ if, and only if, $(X,\tau_c)$ satisfies $\sone(\dense_{\theta},\dense_{\theta}^{gp})$.}
\item{$(X,\tau)$ satisfies $\sfin(\dense,\dense)$ if, and only if, $(X,\tau_c)$ satisfies $\sfin(\dense_{\theta},\dense_{\theta})$.}
\item{$(X,\tau)$ satisfies $\sfin(\dense,\dense^{gp})$ if, and only if, $(X,\tau_c)$ satisfies $\sfin(\dense_{\theta},\dense_{\theta}^{gp})$.}
\end{enumerate}
\end{corollary}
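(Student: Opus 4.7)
The plan is to observe that each equivalence in the corollary follows once one shows that the underlying families of subsets coincide between $(X,\tau)$ and $(X,\tau_c)$, because the selection principles $\sone$ and $\sfin$ depend only on their source/target families. So the entire proof reduces to two family-equality statements: $\dense$ of $(X,\tau)$ equals $\dense_\theta$ of $(X,\tau_c)$, and $\dense^{gp}$ of $(X,\tau)$ equals $\dense^{gp}_\theta$ of $(X,\tau_c)$.

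The first equality is precisely Lemma \ref{lemma:density}, so parts (1) and (3) are immediate: any sequence $(D_n)$ of $\tau$-dense sets is also a sequence of $\tau_c$-$\theta$-dense sets, and a selector (single element for $\sone$, finite subset for $\sfin$) yields a $\tau$-dense union iff it yields a $\tau_c$-$\theta$-dense union. Thus $(X,\tau)\models \sone(\dense,\dense)$ iff $(X,\tau_c)\models \sone(\dense_\theta,\dense_\theta)$, and similarly for $\sfin$.

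For parts (2) and (4) I would prove the second equality by handling each direction separately, with the witnessing partition $C = \bigcup_n C_n$ carried over unchanged. For the forward direction, let $C$ be groupably $\tau$-dense via $(C_n)$ and let $V = U\setminus A$ be an arbitrary nonempty $\tau_c$-open set with $U\in\tau$ nonempty; by Lemma \ref{lemma:openclosure} we have $\overline{V}^{\tau_c} = \overline{U}^\tau \supseteq U$, and since $U\cap C_n\neq\emptyset$ for all but finitely many $n$, the same holds for $\overline{V}^{\tau_c}\cap C_n$. For the converse, let $C$ be groupably $\theta$-dense in $(X,\tau_c)$ via $(C_n)$ and let $U$ be any nonempty $\tau$-open set; using that $(X,\tau)$ is $\textsf{T}_3$ pick a nonempty $W\in\tau$ with $\overline{W}^\tau\subseteq U$, and apply the groupable $\theta$-density to the $\tau_c$-open set $W$ together with Lemma \ref{lemma:openclosure} (with $A=\emptyset$) to get $\overline{W}^{\tau_c} = \overline{W}^\tau \subseteq U$ meeting $C_n$ for all but finitely many $n$, hence $U\cap C_n\neq\emptyset$ cofinitely. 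This gives the desired family equality, and parts (2) and (4) follow by the same mechanical argument as (1) and (3).

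The only nonroutine step is the converse direction for groupable density, which genuinely needs the $\textsf{T}_3$ hypothesis to shrink a $\tau$-open set to one whose $\tau$-closure still lies inside it; without regularity the $\tau_c$-$\theta$-closure would only dominate the $\tau$-closure, not the open set itself, and one could not pass back from $\overline{W}^\tau\cap C_n$ to $U\cap C_n$. Everything else is bookkeeping about the definitions of $\sone$ and $\sfin$.
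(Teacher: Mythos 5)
Your proposal is correct and follows the paper's own route: the paper states this result as an immediate consequence of Lemma \ref{lemma:density} (with Lemma \ref{lemma:openclosure} behind it), which is exactly the family-identification argument you give, and it supplies no further proof. The one piece the paper leaves implicit --- that the groupably dense subsets of $(X,\tau)$ coincide with the groupably $\theta$-dense subsets of $(X,\tau_c)$, whose converse direction genuinely needs regularity to shrink a nonempty $\tau$-open $U$ to a nonempty $W$ with $\overline{W}\subseteq U$ --- is precisely what you prove, so your write-up is, if anything, more complete than the paper's.
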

The notions $\sone(\dense,\dense^{gp})$ and $\sfin(\dense,\dense^{gp})$ were respectively called GN-separable and H-separable in \cite{BBM}

A topological space is \emph{strongly} separable if each dense subset contains a countable subset that is dense in the space. The well-known cardinal function $\delta$ is defined by
\[
  \delta(X) = \min\{\kappa\ge\aleph_0: \mbox{Each dense $D\subseteq X$ contains a dense $C\subseteq D$ such that }\vert C\vert\le \kappa\}.
\]
The cardinal function $\delta_{\theta}$ is defined by
\[
  \delta_{\theta}(X) = \min\{\kappa\ge\aleph_0: \mbox{Each $\theta$-dense $D\subseteq X$ contains a $\theta$-dense $C\subseteq D$ such that }\vert C\vert\le \kappa\}.
\]
The cardinal function $t_{\theta}(X,x)$ is defined to be the least infinite cardinal $\kappa$ such that each $A\subseteq X\setminus\{x\}$ with $x\in cl_{\theta}(A)$  contains a $C\subseteq A$ such that $x\in cl_{\theta}(C)$ and $\vert C\vert\le \kappa$.

\begin{theorem}\label{cohenreals} For infinite cardinal number $\kappa$ the following are equivalent:
\begin{enumerate}
\item{$\kappa < \textsf{cov}(\mathcal{M})$.}
\item{For each topological space $(X,\tau)$ with $\delta_{\theta}(X) = \aleph_0$ and $\pi_{\theta}(X)\le \kappa$  ONE has no winning strategy in the game  ${\textsf G}_1(\dense_{\theta},\dense_{\theta})$.}
\item{For each topological space $(X,\tau)$ with $\delta_{\theta}(X) = \aleph_0$ and $\pi_{\theta}(X)\le \kappa$, the selection principle  ${\textsf S}_1(\dense_{\theta},\dense_{\theta})$ holds.}
\end{enumerate}
\end{theorem}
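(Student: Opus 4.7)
I plan to run the cycle $(2)\Rightarrow(3)\Rightarrow(1)\Rightarrow(2)$. The step $(2)\Rightarrow(3)$ is immediate: a sequence $(A_n:n<\omega)$ of $\theta$-dense sets determines the non-adaptive strategy $\sigma(s)=A_{\vert s\vert}$ for ONE, and (2) guarantees a play $(b_n)_n$ against $\sigma$ whose record $\{b_n:n<\omega\}$ is $\theta$-dense, i.e., exactly a selection witnessing $\sone(\dense_{\theta},\dense_{\theta})$.

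For $(1)\Rightarrow(2)$ I run a Cohen-reals Baire category argument. Let $\sigma$ be a strategy for ONE and let $\mathcal{V}=\{V_\alpha:\alpha<\kappa\}$ be a $\theta$-$\pi$-base witnessing $\pi_{\theta}(X)\le\kappa$. For each legal $\sigma$-position $s=(t_0,\ldots,t_{n-1})$, the move $\sigma(s)$ is $\theta$-dense, so by $\delta_{\theta}(X)=\aleph_0$ I fix once and for all a countable $\theta$-dense subset $D_s\subseteq\sigma(s)$ together with an enumeration $D_s=\{d^s_n:n<\omega\}$. Each $f\in\omega^\omega$ then determines a unique $\sigma$-play $(t_n(f))_{n<\omega}$ recursively by
\[
  t_n(f)=d^{s_n(f)}_{f(n)},\qquad s_n(f)=(t_0(f),\ldots,t_{n-1}(f)),
\]
so that $t_n(f)$ depends only on $f\restriction(n+1)$. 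For each $V\in\mathcal{V}$ the set
\[
  O_V=\{f\in\omega^\omega:\; t_n(f)\in\overline{V}\text{ for some }n\}
\]
is open by the previous dependence, and dense because, for any $s\in\omega^{<\omega}$, writing $s^\ast$ for the partial $\sigma$-play of length $\vert s\vert$ coded recursively by $s$, the set $D_{s^\ast}$ is $\theta$-dense and so meets $\overline{V}$ at some $d^{s^\ast}_{n^*}$, whence $s$ extended by $n^*$ lies in $O_V$. Since $\vert\mathcal{V}\vert\le\kappa<\textsf{cov}(\mathcal{M})$, the $\kappa$ meager sets $\omega^\omega\setminus O_V$ do not cover $\omega^\omega$, so some $f\in\bigcap_{V\in\mathcal{V}}O_V$ exists. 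For any open $U\subseteq X$, choose $V\in\mathcal{V}$ with $V\subseteq\overline{U}$; then $\overline{V}\subseteq\overline{U}$ and some $t_n(f)\in\overline{V}\subseteq\overline{U}$, so $\{t_n(f):n<\omega\}$ is $\theta$-dense and defeats $\sigma$.

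For $(3)\Rightarrow(1)$ I argue the contrapositive. Assume $\textsf{cov}(\mathcal{M})\le\kappa$. By Lemma \ref{lemma:density}, Corollary \ref{cor:cocountablepibase}, and Corollary \ref{cor:thetadense}, it suffices to exhibit a separable $\textsf{T}_3$-space $(Y,\rho)$ in which every nonempty open set is uncountable, with $\delta(Y)=\aleph_0$ and $\pi(Y)\le\kappa$, in which $\sone(\dense,\dense)$ fails; the cocountable refinement $(Y,\rho_c)$ then violates~(3). For such a $Y$ I take $Y=C_p(Z)$ where $Z\subseteq\reals$ is a subset of cardinality $\kappa$ failing $\sone(\Omega,\Omega)$, which exists because $\kappa\ge\textsf{cov}(\mathcal{M})$ implies the existence of non-Rothberger sets of reals of cardinality $\kappa$. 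Then $C_p(Z)$ is Tychonoff with all nonempty open sets uncountable, has $\pi(C_p(Z))\le d(C_p(Z))=\aleph_0\le\kappa$ and $\delta(C_p(Z))=\aleph_0$ (both classical for $C_p$ of a separable metric space), while the failure of $\sone(\dense,\dense)$ on $C_p(Z)$ follows at once from Theorem \ref{thm:cps1}.

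The crux of the argument is the category calculation in $(1)\Rightarrow(2)$: one must fix the enumerations $D_s=\{d^s_n\}$ uniformly in $s$ so that TWO's $n$-th move truly depends only on $f\restriction(n+1)$, securing openness of the sets $O_V$; once this is arranged, density of $O_V$ and the $\textsf{cov}(\mathcal{M})$-diagonalization are routine. The verification burden in $(3)\Rightarrow(1)$ is largely in confirming that $\delta(C_p(Z))=\aleph_0$ and that $\sone(\dense,\dense)$ fails in $C_p(Z)$; the cocountable-refinement machinery of Section \ref{sec:cocountable} together with Theorem \ref{thm:cps1} reduces this to the better-studied combinatorics of $\sone(\Omega,\Omega)$ on sets of reals.
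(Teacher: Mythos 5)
Your proposal follows essentially the same route as the paper: the same cycle of implications (merely listed in a different order), with $(1)\Rightarrow(2)$ proved by coding the plays against ONE's strategy into $\omega^{\omega}$ and diagonalizing via $\textsf{cov}(\mathcal{M})$ (your dense open sets $O_V$ are precisely the complements of the paper's nowhere dense sets $D_{\alpha}$, and your explicit choice of countable $\theta$-dense $D_s\subseteq\sigma(s)$ is a cleaner rendering of the paper's ``we may assume each move by ONE is countable''), and $(3)\Rightarrow(1)$ proved, contrapositively rather than directly, by feeding $C_p(Z)$ for a set of reals $Z$ of cardinality $\kappa$ through the cocountable-refinement machinery of Section \ref{sec:cocountable} together with Theorem \ref{thm:cps1}. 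That part of the comparison is only cosmetic.

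There is, however, one concrete error in your $(3)\Rightarrow(1)$: the claim $\pi(C_p(Z))\le d(C_p(Z))=\aleph_0$. The inequality between $\pi$-weight and density runs the other way --- $d(X)\le\pi(X)$ always, since choosing one point from each member of a $\pi$-base produces a dense set --- and in fact $\pi(C_p(Z))=w(C_p(Z))=\vert Z\vert=\kappa$, which is uncountable in the case of interest. Worse, your claim is inconsistent with your own argument: if $\pi(C_p(Z))$ were $\aleph_0$, then by Theorem \ref{thm:coc6} TWO would have a winning strategy in $\gone(\Dense,\Dense)$ on the $\textsf{T}_3$ space $C_p(Z)$, hence $\sone(\Dense,\Dense)$ would hold there, contradicting exactly the failure of $\sone(\Dense,\Dense)$ you need from Theorem \ref{thm:cps1}. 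Fortunately the error is localized: all your reduction requires is $\pi(C_p(Z))\le\kappa$, which is true (with equality) by the standard fact that the $\pi$-weight of $C_p$ of an infinite Tychonoff space equals the cardinality of the underlying space --- this is what the paper itself invokes when it says the space has $\pi$-weight $\kappa$. Replacing the false justification by that fact repairs the proof; then Corollary \ref{cor:cocountablepibase}, Lemma \ref{lemma:density} and Corollary \ref{cor:thetadense} give $\pi_{\theta}(C_p(Z),\tau_c)=\kappa$, $\delta_{\theta}(C_p(Z),\tau_c)=\aleph_0$, and the failure of $\sone(\dense_{\theta},\dense_{\theta})$, as you intended. (A further minor point, shared with the paper: choosing a set of reals of cardinality $\kappa$ tacitly assumes $\kappa\le 2^{\aleph_0}$; for $\kappa>2^{\aleph_0}$ one notes that statement (3) for $\kappa$ implies statement (3) for $2^{\aleph_0}$, and both sides of the equivalence fail.)
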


\begin{proof} 
{\flushleft{$(1)\Rightarrow (2)$:}} Let $F$ be a strategy for ONE in the game $\gone(\dense_{\theta},\dense_{\theta})$. As the space is strongly $\theta$-dense, we may assume that each move by ONE is a countable $\theta$-dense subset of $X$. Also, let $\{B_{\alpha}:\alpha<\kappa\}$ be a $\pi-\theta$-base for $X$. 

From $F$ we define the following object: For each countable $\theta$-dense set $D\subset X$, fix a bijective enumeration 
\[
  (x_n(D):n<\omega).
\]  
With $D_{\emptyset} = F(\emptyset)$ ONE's first move, we obtain the sequence $(x_{(n)}:n<\omega)$ enumerating $D_{\emptyset}$. For each $n$ we obtain $D_{(n)} = F(x_{(n)})$, enumerated bijectively as $(x_{(n,m)}:m<\omega)$. For each $(n_1,n_2)$ we obtain $D_{(n_1,n_2)} = F(x_{(n_1)},\; x_{(n_1,\; n_2)})$, enumerated bijectively as $(x_{(n_1,\;n_2,\;n)}:n<\omega)$, and so on. Thus, from $F$ we recursively define finite sequences $(D_{\emptyset},\;x_{(n_1)},\;D_{(n_1)},\;x_{(n_1,\;n_2)}, \cdots)$ of partial $F$-plays.

Now fix an $\alpha<\kappa$ and define $D_{\alpha}\subset\;^{\omega}\omega$ as follows:
\[
  D_{\alpha} = \{f\in\;^{\omega}\omega:\; (\forall n<\omega)(x_{(f(0),\cdots,f(n))}\not\in \overline{B}_{\alpha})\}.
\]
{\flushleft{Claim:}} Each $D_{\alpha}$ is a nowhere dense subset of $X$.\\
For let a finite sequence $(n_1,\cdots,n_k)$ of elements of $\omega$ be given. We show that the basic open subset $\lbrack (n_1,\cdots,n_k) \rbrack$ of $^{\omega}\omega$ contains a nonempty open subset disjoint from $D_{\alpha}$.  To see this, note that the set $D_{(n_1,\cdots,\;n_k)}$ is $\theta$-dense in $X$, and so $\overline{B}_{\alpha}\cap D_{(n_1,\cdots,\;n_k)} \neq \emptyset$. Choose an $n$ with $x_{(n_1,\cdots,n_k,n)} \in \overline{B}_{\alpha}$. Then $\lbrack (n_1,\cdots,n_k,n)\rbrack \cap D_{\alpha} = \emptyset$, and $\lbrack (n_1,\cdots,n_k,n)\rbrack \subset \lbrack (n_1,\cdots,n_k)\rbrack$. This completes the proof of the claim.

By $(1)$ the family $\{D_{\alpha}:\alpha<\kappa\}$ of nowhere dense sets do not cover $^{\omega}\omega$. Choose an $f\in\;^{\omega}\omega\setminus \bigcup\{D_{\alpha}:\alpha<\kappa\}$. Then the $F$-play
\[
  F(\emptyset),\; x_{(f(0))}, F(x_{(f(0))}),\; x_{(f(0),f(1))},\cdots
\]
of the game $\gone(\dense_{\theta},\dense_{\theta})$ is lost by ONE.

{\flushleft{$(2)\Rightarrow (3)$}} For let a sequence $(D_n:n<\omega)$ be a given sequence of $\theta$-dense subsets of the space $X$. Define a strategy $F$ for ONE so that for each $n$ ONE's response in inning $n$ is $D_n$. By (2) ONE has no winning strategy in $\gone(\mathfrak{D}_{\theta},\mathfrak{D}_{\theta})$, and thus there is an $F$-play
\[
  D_0,\; x_0,\; D_1,\; x_1,\; \cdots,\; D_n,\; x_n,\; \cdots
\]
lost by ONE. Since for each $n$ we have $x_n\in D_n$, and $\{x_n:n<\omega\}$ is a $\theta$-dense set, (3) follows.

{\flushleft{$(3)\Rightarrow (1)$}}
Assume (3). Let $X$ be a set of real numbers of (infinite) cardinality $\kappa$. Then the space ${\textsf{C}_p}(X)$ under the usual pointwise topology $\tau$ is $T_3$ and has the property that each nonempty open subset is uncountable. This space has $\pi$-weight $\kappa$, and is strongly separable. Thus $(\textsf{C}_p(X),\tau_c)$ has the properties that $\pi_{\theta}(\textsf{C}_p(X)) =\kappa$ (Corollary \ref{cor:cocountablepibase}), and $\delta_{\theta}(\textsf{C}_p(X)) = \aleph_0$ (Lemma \ref{lemma:density}). Applying (3) to this space we find that $(\textsf{C}_p(X),\tau_c)$ has the property $\sone(\mathfrak{D}_{\theta},\mathfrak{D}_{\theta})$. It follows from Corollary \ref{cor:thetadense} that $(\textsf{C}_p(X),\tau)$ has the property $\textsf{S}_1(\mathfrak{D},\mathfrak{D})$. By Theorem 13 of \cite{COC6} the set of real numbers $X$ satisfies $\textsf{S}_1(\Omega,\Omega)$. Since this conclusion holds for any set of real numbers of cardinality $\kappa$ it follows from Theorem 4.8 of \cite{COC2} that $\kappa<\textsf{cov}(\mathcal{M})$. 
\end{proof}

The proof of the following theorem uses similar ideas.
\begin{theorem}\label{dominatingreals1} For infinite cardinal number $\kappa$ the following are equivalent:
\begin{enumerate}
\item{$\kappa < \mathfrak{d}$.}
\item{For each topological space $(X,\tau)$ with $\delta_{\theta}(X) = \aleph_0$ and $\pi_{\theta}(X)\le \kappa$  ONE has no winning strategy in the game  ${\textsf G}_{fin}(\dense_{\theta},\dense_{\theta})$.}
\item{For each topological space $(X,\tau)$ with $\delta_{\theta}(X) = \aleph_0$ and $\pi_{\theta}(X)\le \kappa$, the selection principle  ${\textsf S}_{fin}(\dense_{\theta},\dense_{\theta})$ holds.}
\end{enumerate}
\end{theorem}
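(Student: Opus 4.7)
The plan mirrors the proof of Theorem \ref{cohenreals}, replacing Cohen reals with escaping functions modulo eventual domination. For $(1)\Rightarrow(2)$, suppose $F$ is a strategy for ONE in $\gfin(\dense_{\theta},\dense_{\theta})$. By $\delta_{\theta}(X)=\aleph_0$ we may assume each move of ONE is a countable $\theta$-dense set. Enumerate ONE's potential moves along a tree: for $s\in\omega^{<\omega}$ let $D_s$ be ONE's move after the play in which TWO responded in inning $i<|s|$ with the first $s(i)$ elements of the bijective enumeration $(x_n(D_{s\restriction i}):n<\omega)$. Fix a $\theta$-$\pi$-base $\{B_\alpha:\alpha<\kappa\}$ and set $n(s,\alpha)=\min\{n:x_n(D_s)\in\overline{B}_\alpha\}$, which exists by $\theta$-density of $D_s$.

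I would then recursively define $g_\alpha\in\omega^{\omega}$ by $g_\alpha(0)=n(\emptyset,\alpha)+1$ and, for $k\ge 1$, $g_\alpha(k)=1+\max\{n(s,\alpha):s\in\omega^{k},\; s(i)\le g_\alpha(i)\text{ for all }i<k\}$; each maximum is over a finite set. Since $\kappa<\mathfrak{d}$, the family $\{g_\alpha:\alpha<\kappa\}$ is not dominating, so pick $f\in\omega^{\omega}$ with $f\not\le^{*}g_\alpha$ for every $\alpha$. Play $f$ against $F$: in inning $k$ TWO picks $T_k=\{x_n(D_{f\restriction k}):n<f(k)\}$. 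For each $\alpha$, let $k$ be least with $f(k)>g_\alpha(k)$; minimality places $f\restriction k$ in the range defining $g_\alpha(k)$, so $f(k)>g_\alpha(k)>n(f\restriction k,\alpha)$, whence $x_{n(f\restriction k,\alpha)}(D_{f\restriction k})\in T_k\cap\overline{B}_\alpha$. Thus $\bigcup_k T_k$ meets $\overline{B}_\alpha$ for every $\alpha$, hence is $\theta$-dense, and ONE loses.

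The implication $(2)\Rightarrow(3)$ is the standard constant-strategy argument: against a sequence $(D_n)$ of $\theta$-dense sets, let ONE play $D_n$ in inning $n$; any non-winning run of this strategy furnishes the required finite selectors. For $(3)\Rightarrow(1)$ I would argue as in Theorem \ref{cohenreals}: given $X\subseteq\mathbb{R}$ of cardinality $\kappa$, the space $(C_p(X),\tau)$ is $T_3$ with every nonempty open set uncountable, is strongly separable, and has $\pi$-weight $\kappa$. By Corollary \ref{cor:cocountablepibase} and Lemma \ref{lemma:density}, $(C_p(X),\tau_c)$ has $\pi_{\theta}=\kappa$ and $\delta_{\theta}=\aleph_0$; applying (3) gives $\sfin(\dense_{\theta},\dense_{\theta})$ there, Corollary \ref{cor:thetadense}(3) transfers this to $\sfin(\dense,\dense)$ on $(C_p(X),\tau)$, and Theorem \ref{thm:cpsfin} yields $\sfin(\Omega,\Omega)$ on $X$. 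Since every $X\subseteq\mathbb{R}$ of cardinality $\kappa$ thereby satisfies $\sfin(\Omega,\Omega)$, the standard characterization of $\mathfrak{d}$ as the critical cardinality for $\sfin(\Omega,\Omega)$ on sets of reals (the analog for $\mathfrak{d}$ of Theorem 4.8 of \cite{COC2}) gives $\kappa<\mathfrak{d}$.

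The delicate step is $(1)\Rightarrow(2)$: the escape guarantee from $\mathfrak{d}$ gives an $f$ that surpasses $g_\alpha$ only at some coordinates we do not know in advance, and we need TWO's pick at one such coordinate to already capture $\overline{B}_\alpha$. The recursion for $g_\alpha$ is engineered so that at the \emph{first} coordinate of escape the preceding history $f\restriction k$ is automatically bounded by the $g_\alpha$-values, which is precisely the bookkeeping that makes the uniform bound $n(s,\alpha)<g_\alpha(|s|)$ applicable along the actual play.
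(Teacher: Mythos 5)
Your proof is correct, but your key implication $(1)\Rightarrow(2)$ follows a genuinely different route from the paper's. The paper never manipulates dominating families directly: from ONE's strategy $\sigma$ it builds, for each $\alpha<\kappa$, the set $\Psi(\alpha)$ of finite sequences of finite subsets of $\omega$ whose last block captures $\overline{B}_{\alpha}$, views the characteristic functions of these sets as a subspace $X_{\mathcal{B}}$ of a Cantor cube, notes that $\vert X_{\mathcal{B}}\vert\le\kappa<\mathfrak{d}$ makes $X_{\mathcal{B}}$ a Menger space, and invokes Hurewicz's result (\cite{WH}) that ONE has no winning strategy in $\gfin(\open,\open)$ on such a space; a play defeating an induced cover-strategy $\gamma$ on $X_{\mathcal{B}}$ is then translated back into a play defeating $\sigma$. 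You instead code the strategy tree into functions $g_{\alpha}$ and apply the definition of $\mathfrak{d}$ directly; your recursion---maximizing $n(s,\alpha)$ only over the finitely many histories $s$ bounded coordinatewise by the earlier values of $g_{\alpha}$---is sound, since at the least $k$ with $f(k)>g_{\alpha}(k)$ the history $f\restriction k$ lies in that finite set, so $n(f\restriction k,\alpha)<g_{\alpha}(k)<f(k)$, TWO's $k$-th move meets $\overline{B}_{\alpha}$, and meeting every $\overline{B}_{\alpha}$ yields $\theta$-density because every $\overline{U}$ contains some $B_{\alpha}$ and hence $\overline{B}_{\alpha}$. Your approach buys a self-contained, elementary argument with no auxiliary space and no appeal to the nontrivial game-theoretic characterization of the Menger property; the paper's approach buys modularity, since the same template with the Hurewicz property and $\mathfrak{b}$ in place of Menger and $\mathfrak{d}$ yields Theorem \ref{unboundedreals1}. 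Your $(2)\Rightarrow(3)$ and $(3)\Rightarrow(1)$ coincide with the paper's (which cites Theorem 35 of \cite{COC6} and Theorem 4.6 of \cite{COC2} where you cite Theorem \ref{thm:cpsfin} and the critical-cardinality characterization); incidentally, your computation $\pi_{\theta}(C_p(X),\tau_c)=\kappa$ is the correct one---the paper's text misstates it as $\aleph_0$ at that point---and only the inequality $\pi_{\theta}\le\kappa$ is actually needed to apply (3).
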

\begin{proof}
{\flushleft $(1)\Rightarrow(2)$\footnote{This proof of $(1)\Rightarrow(2)$ is based on an idea in the proof of Theorem 40 of \cite{COC6}. The details were not given explicitly in \cite{COC6}, whence we give details for the argument here}}  .
Observe that a space satisfying (1) is strongly $\theta$-dense: For let $D$ be a given $\theta$-dense subset of the space. Consider the strategy for ONE that calls on ONE to play $D$ in each inning. By (1) this is not a winning strategy for ONE. Let $(D,\; x_0,\; D,\; x_1,\; \cdots,\; D,\; x_n,\; \cdots)$ be a play lost by ONE. Then the countable subset $\{x_n:\; n<\omega\}$ of $D$ is a $\theta$-dense subset of the space.

Now let $\sigma$ be a strategy for ONE in the game $\gfin(\dense_{\theta},\dense_{\theta})$. As the space is strongly $\theta$-dense, we may assume that each move by ONE is a countable $\theta$-dense subset of $X$. Also, let $\{B_{\alpha}:\alpha<\kappa\}$ be a $\pi$-$\theta$-base for $X$. 
From $\sigma$ we define the following object: For each countable $\theta$-dense set $D\subset X$, fix bijective enumeration 
 $(x_{(n)}(D):n<\omega)$ 
where $D = \{x_{(n)}(D): n<\omega\}$.

With $D_{\emptyset} = \sigma(\emptyset)$ ONE's first move, for each finite subset $F$ of $\omega$ enumerate the set 
$D_{F}=\sigma(\{d_{(n)}:n\in F\})$ bijectively as $(d_{(F,n)}:n<\omega)$. For each pair $(F_1,F_2)$ of finite subsets of $\omega$ 
enumerate   
$D_{F_1,F_2} = \sigma(\{d_{(n)}:n \in F_1\}, \{d_{(F_1,n)}: n \in F_2\})$ bijectively as $(d_{(F_1,F_2,n)}:n<\omega)$. In general, for each finite sequence $(F_1, F_2, \cdots,F_k)$ of finite subsets of $\omega$  
enumerate 
$D_{F_1,\cdots,F_k} = \sigma(\{d_{(n)}:n \in F_1\}, \{d_{(F_1,n)}: n \in F_2\},\cdots,\{d_{(F_1,\cdots,F_{k-1},n)}: n\in F_k\})$
 bijectively as $(d_{(F_1,\cdots,F_k,n)}:n<\omega)$.

Now fix an $\alpha<\kappa$ and define $ \Psi(\alpha)$ to be the set of finite sequences $(F_1,\cdots,F_k)$ of finite subsets of $\omega$ 
for which
\[
 \{d_{(F_1,\cdots,F_{k-1},\;j)}: j\in F_k \}
 \cap \overline{B_{\alpha}} \neq \emptyset
\]   
Then each $\Psi(\alpha)$ is a countably infinite subset of the set $^{<\omega}(\lbrack \omega \rbrack^{<\aleph_0})$. If $\{0,\; 1\}$ is endowed with the discrete topology, then the set $^{^{<\omega}(\lbrack \omega \rbrack^{<\aleph_0})}\{0,\;1\}$ is homeomorphic to the Cantor set. Let $X_{\mathcal{B}}$ be the subset of this space consisting of the characteristic functions of the sets $\psi(\alpha)$.

Since $\vert X_{\mathcal{B}}\vert \le \kappa<\mathfrak{d}$, the space $X_{\mathcal{B}}$ is a Menger space, and thus 
ONE has no winning strategy in the game $\gfin(\open,\open)$ on $X_{\mathcal{B}}$ - \cite{WH}. Define the strategy $\gamma$ for ONE in the game $\gfin(\open,\open)$ on $X_{\mathcal{B}}$ as follows:

$\gamma(X_{\mathcal{B}}) = \{\lbrack ( (H),1)\rbrack : H \mbox{ a finite subset of } \omega\}$\footnote{The notation $\lbrack ((F_1,\cdots,F_n),i)\rbrack$ denotes the set of all elements of $^{^{<\omega}(\lbrack \omega \rbrack^{<\aleph_0})}\{0,\;1\}$ that take the value $i$ at the domain element $(F_1,\cdots,F_n)$. }. 
Since $D_{\emptyset} = \sigma(\emptyset)$ is a $\theta$-dense subset of $X$, for each $B\in \mathcal{B}$ we have $\overline{B}\cap \sigma(\emptyset) \neq \emptyset$. It follows that for each $\alpha<\kappa$ there is a finite subset $F$ of $\sigma(\emptyset)$ such that $F = \{d_{(j)}:j\in H\}\cap \overline{B}_{\alpha}\neq \emptyset$, and so the sequence $(H)$ is a member of $\psi(\alpha)$. It follows that the characteristic function of $\psi(\alpha)$ is a member of the basic open set $\lbrack (H,1)\rbrack$, whence $\gamma(X_{\mathcal{B}})$ is an open cover of the space $X_{\mathcal{B}}$. When TWO chooses a finite subset $\mathcal{V}_1\subseteq \gamma(X_{\mathcal{B}})$, say $\mathcal{V}_1 = \{ \lbrack (G_j,1)\rbrack: j\le k\}$, then define 
\begin{enumerate}
\item{$F_1 = \{d_{(i)}\in \sigma(\emptyset): i \in H_1 = \cup\{G_j:j\le k\}$.}
\item{$D_1 = \sigma(F_1) = \{d_{(H_1,j)}: j<\omega\}$}
\item{$\gamma(\mathcal{V}_1) = \{\lbrack ((H_1,H),1)\rbrack: H \mbox{ a finite subset of $\omega$}\}$}
\end{enumerate}

Since $D_1$ is a $\theta$-dense subset of $X$, there is for each $\alpha<\kappa$ a finite subset $G$ of $\omega$ such that $\{d_{(H_1,j)}:j\in G\} \cap \overline{B}_{\alpha}$ is nonempty. It follows that $\gamma(\mathcal{V}_1)$ is an open cover of $X_{\mathcal{B}}$.
When TWO chooses a finite subset $\mathcal{V}_2\subseteq \gamma(\mathcal{V}_1)$, say $\{ \lbrack (H_1,G_j,1)\rbrack: j\le k\}$, then define
\begin{enumerate}
\item{$F_2 = \{d_{(H_1,i)}\in \sigma(F_1): i \in H_2 = \cup\{G_j:j\le k\}$.}
\item{$D_2 = \sigma(F_1,F_2) = \{d_{(H_1,H_2,j)}: j<\omega\}$}
\item{$\gamma(\mathcal{V}_1,\mathcal{V}_2) = \{\lbrack ((H_1,H_2,H),1)\rbrack: H \mbox{ a finite subset of $\omega$}\}$}
\end{enumerate}

Since $D_2$ is a $\theta$-dense subset of $X$, there is for each $\alpha<\kappa$ a finite subset $G$ of $\omega$ such that $\{d_{(H_1,H_2,j)}:j\in G\} \cap \overline{B}_{\alpha}$ is nonempty. It follows that $\gamma(\mathcal{V}_1,\mathcal{V}_2)$ is an open cover of $X_{\mathcal{B}}$.
When TWO chooses a finite subset $\mathcal{V}_3\subseteq \gamma(\mathcal{V}_1,\mathcal{V}_2)$, say $\{ \lbrack ((H_1,H_2,G_j),1)\rbrack: j\le k\}$, then define
\begin{enumerate}
\item{$F_3 = \{d_{(H_1,H_2,i)}\in \sigma(F_1,F_2): i \in H_3 = \cup\{G_j:j\le k\}$.}
\item{$D_3 = \sigma(F_1,F_2,F_3) = \{d_{(H_1,H_2,H_3,j)}: j<\omega\}$}
\item{$\gamma(\mathcal{V}_1,\mathcal{V}_2,\mathcal{V}_3) = \{\lbrack ((H_1,H_2,H_3,H),1)\rbrack: H \mbox{ a finite subset of $\omega$}\}$}
\end{enumerate}
 
 This procedure defines a strategy $\gamma$ for ONE of the game $\gfin(\open,\open)$ on the space $X_{\mathcal{B}}$. Since $X_{\mathcal{B}}$ is a Menger space it follows that $\gamma$ is not a winning strategy for ONE. Consider a $\gamma$-play
 \[
   \mathcal{U}_1,\; \mathcal{V}_1,\;    \mathcal{U}_2,\; \mathcal{V}_2,\;    \mathcal{U}_3,\; \mathcal{V}_3,\;  \cdots
 \]
 lost by ONE of the game $\gfin(\open,\open)$. By the definition of $\gamma$ we find associated sequences 
 \begin{itemize}
 \item[(a)]{$(H_n:n\in{\mathbb N})$ of finite subsets of $\omega$,}
 \item[(b)]{$(F_n:n\in{\mathbb N})$ of finite subsets of the space $X$,}
 \item[(c)]{$(D_n:n\in{\mathbb N})$ of $\theta$-dense subsets of the space $X$}
 \end{itemize} 
 such that: $F_1$ is a finite subset of $D_{\emptyset} = \sigma(\emptyset)$, and 
 \[
   D_{\emptyset}, F_1, D_1,\cdots, F_n, D_n,\cdots
 \]
 is a $\sigma$-play of $\gfin(\mathfrak{D}_{\theta},\mathfrak{D}_{\theta})$.
 
 $F_1 = \{d_{(i)}: i\in H_1\} \subset D_{\emptyset}$, and $F_{n+1} = \{d_{(H_1,\cdots,H_n,i)}: i \in H_{n+1}\}\subseteq D_n$ for all $n$.
 
 We claim that this $\sigma$-play of $\gone(\mathfrak{D}_{\theta},\mathfrak{D}_{\theta})$ is won by TWO. 
 Indeed, consider the set 
 \[
   D = \bigcup_{n=1}^{\infty}F_n.
 \]
Let $B_{\alpha}$ be an element of $\mathcal{B}$. Since $\bigcup_{n=1}^{\infty}\mathcal{V}_n$ is a cover of $X_{\mathcal{B}}$,  $\psi(\alpha)$ is an element of a set of the form $\lbrack ((H_1,\cdots,H_n,G_j),1)\rbrack$ for some finite subset $G_j$ of $H_{n+1}$, and thus (see the definition of $\psi(\alpha)$) an element of the set $\lbrack ((H_1,\cdots,H_n,H_{n+1}),1)\rbrack$. This implies that $F_{n+1}\cap\overline{B}_{\alpha}$ is nonempty, and thus $D$ is $\theta$-dense in $X$.
 
{\flushleft $(2)\Rightarrow(3)$} Let a sequence $(D_n:n<\omega)$ of $\theta$-dense subsets of $X$ be given. Define a strategy $F$ for ONE in the game $\gfin(\mathfrak{D}_{\theta},\mathfrak{D}_{\theta})$ that for each $n$, in the $n$-th inning calls on ONE to select $D_n$. By (2) this strategy is not a winning strategy. Thus, for each $n$ select a finite set $F_n\subset D_n$ witnessing that the play 
\[
 D_0,\; F_0,\; \cdots,\; D_n,\; F_n,\;\cdots
\]
is lost by ONE. Then $\bigcup_{n<\omega}F_n$ is $\theta$-dense in $X$. It follows that $X$ satisfies the selection principle $\sfin(\mathfrak{D}_{\theta}, \mathfrak{D}_{\theta})$.

{\flushleft $(3)\Rightarrow(1)$} Let $X$ be a set of real numbers of the infinite cardinality $\kappa$. Then the space $\textsf{C}_p(X)$ is a completely regular Hausdorff space, thus $\textsf{T}_3$, and each nonempty open subset is uncountable. Moreover $\delta(C_p(X)) = \aleph_0$ and $\pi_{\theta}(\textsf{C}_p(X),\tau_c) = \aleph_0$. It follows that $\delta_{\theta}(C_p(X)\tau_c) = \aleph_0$ and thus, assuming (3), $(\textsf{C}_p(X),\tau_c)$ satisfies the selection property $\sfin(\mathfrak{D}_{\theta},\mathfrak{D}_{\theta})$.  But then by Corollary \ref{cor:thetadense} it follows that $\textsf{C}_p(X)$ satisfies the selection principle $\sfin(\mathfrak{D},\mathfrak{D})$. By Theorem 35 of \cite{COC6}, the space $X$ of real numbers has the selection property $\sfin(\Omega,\Omega)$. It follows that each set of real numbers of cardinality $\kappa$  has the selection property $\sfin(\Omega,\Omega)$. By Theorem 4.6 of \cite{COC2}, $\kappa<\mathfrak{d}$.
\end{proof}

The proofs of the following theorems are omitted as they use ideas similar to the ones in the proofs of Theorems \ref{cohenreals} and \ref{dominatingreals1}.
\begin{theorem}\label{unboundedreals1} For infinite cardinal number $\kappa$ the following are equivalent:
\begin{enumerate}
\item{$\kappa < \mathfrak{b}$.}
\item{For each topological space $(X,\tau)$ with $\delta_{\theta}(X) = \aleph_0$ and $\pi_{\theta}(X)\le \kappa$  ONE has no winning strategy in the game  ${\textsf G}_{fin}(\dense_{\theta},\dense^{gp}_{\theta})$.}
\item{For each topological space $(X,\tau)$ with $\delta_{\theta}(X) = \aleph_0$ and $\pi_{\theta}(X)\le \kappa$, the selection principle  ${\textsf S}_{fin}(\dense_{\theta},\dense^{gp}_{\theta})$ holds.}
\end{enumerate}
\end{theorem}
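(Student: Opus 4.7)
The proof plan follows the same three-step template as Theorems \ref{cohenreals} and \ref{dominatingreals1}. The implication $(2) \Rightarrow (3)$ is routine: given a sequence $(D_n : n < \omega)$ of $\theta$-dense sets, I would define the strategy for ONE that plays $D_n$ in inning $n$; by $(2)$ this strategy is not winning, producing finite $F_n \subseteq D_n$ whose union is groupably $\theta$-dense, the partition being given directly by the innings.

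For the main implication $(1) \Rightarrow (2)$, the approach is to adapt the proof of Theorem \ref{dominatingreals1}, replacing the \emph{Menger} game on the auxiliary space $X_{\mathcal{B}}$ by its \emph{Hurewicz}-type analogue $\gfin(\open, \open^{gp})$, which is known to hold on every set of reals of size less than $\mathfrak{b}$. Concretely, given a strategy $\sigma$ for ONE in $\gfin(\dense_\theta, \dense^{gp}_\theta)$ and a $\pi$-$\theta$-base $\{B_\alpha : \alpha < \kappa\}$, I would bijectively enumerate each potential countable $\theta$-dense response $\sigma(F_1,\ldots,F_k) = \{d_{(F_1,\ldots,F_k,n)} : n < \omega\}$, and for each $\alpha < \kappa$ define
\[
\Psi(\alpha) = \{(F_1,\ldots,F_k) : \{d_{(F_1,\ldots,F_{k-1},j)} : j \in F_k\} \cap \overline{B_\alpha} \neq \emptyset\}.
\]
The characteristic functions $\chi_{\Psi(\alpha)}$ form a subspace $X_{\mathcal{B}}$ of the Cantor set of cardinality at most $\kappa < \mathfrak{b}$, hence a Hurewicz space, so ONE has no winning strategy in $\gfin(\open, \open^{gp})$ on $X_{\mathcal{B}}$. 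One then defines ONE's strategy $\gamma$ on $X_{\mathcal{B}}$ exactly as in the proof of Theorem \ref{dominatingreals1}, using basic clopen sets of the form $\lbrack((H_1,\ldots,H_n),1)\rbrack$. A $\gamma$-play lost by ONE in the groupable sense yields finite families $\mathcal{V}_n$ whose union forms a groupable open cover with partition $(\mathcal{V}_n)$. Translating TWO's choices back, one obtains finite sets $F_n \subseteq \sigma(F_1,\ldots,F_{n-1})$ such that for each $\alpha < \kappa$, $F_n \cap \overline{B_\alpha} \neq \emptyset$ for all but finitely many $n$. Since $\{B_\alpha\}$ is a $\pi$-$\theta$-base, $\bigcup_n F_n$ is then groupably $\theta$-dense with partition by innings, defeating $\sigma$.

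For $(3) \Rightarrow (1)$, let $X \subseteq \reals$ have cardinality $\kappa$. The space $C_p(X)$ is $\T_3$, has $\pi$-weight $\kappa$, and is strongly separable; by Corollary \ref{cor:cocountablepibase} and Lemma \ref{lemma:density}, $(C_p(X), \tau_c)$ satisfies $\delta_\theta = \aleph_0$ and $\pi_\theta \le \kappa$. Applying $(3)$ and Corollary \ref{cor:thetadense}(4), $C_p(X)$ with its usual topology satisfies $\sfin(\dense, \dense^{gp})$, i.e.\ is H-separable. By the $C_p$-duality established in \cite{BBM}, this is equivalent to $X$ having the Hurewicz property $\sfin(\Omega, \Omega^{gp})$. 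Since this conclusion holds for every set of reals of cardinality $\kappa$, the standard characterization of $\mathfrak{b}$ via the Hurewicz property for sets of reals forces $\kappa < \mathfrak{b}$.

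The principal obstacle is verifying that the partition of $\bigcup_n F_n$ by innings really witnesses \emph{groupable} $\theta$-density. The Hurewicz play on $X_{\mathcal{B}}$ only directly asserts that each $\chi_{\Psi(\alpha)}$ lies in the union of TWO's $n$-th finite choice for all but finitely many $n$; one must carefully unpack the definition of $\Psi(\alpha)$ to translate this cofinite covering into the cofinite hitting $F_n \cap \overline{B_\alpha} \neq \emptyset$ required for the groupable partition. This is the one place where the argument goes strictly beyond the Menger-style proof of Theorem \ref{dominatingreals1}.
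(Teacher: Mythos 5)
Your plan is indeed the one the paper intends: the paper omits this proof, saying only that it uses ideas similar to Theorems \ref{cohenreals} and \ref{dominatingreals1}, and your adaptation (Menger/$\mathfrak{d}$ replaced by Hurewicz/$\mathfrak{b}$, plus the $C_p$-duality for $(3)\Rightarrow(1)$) is exactly that; the implications $(2)\Rightarrow(3)$ and $(3)\Rightarrow(1)$ are sound. However, the step you yourself flag as ``the principal obstacle'' in $(1)\Rightarrow(2)$ is a genuine gap, not a loose end. What the lost Hurewicz play on $X_{\mathcal{B}}$ gives you is: for each $\alpha<\kappa$, $F_n\cap\overline{B_{\alpha}}\neq\emptyset$ for all but finitely many $n$. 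That is not yet groupable $\theta$-density of $\bigcup_n F_n$. The sets $F_n$ need not be pairwise disjoint --- the same point of $X$ can occur, under different indices, in many of the countable $\theta$-dense sets $\sigma(F_1,\ldots,F_k)$ --- so ``partition by innings'' is not a partition; and the natural disjointification $C_n=F_n\setminus\bigcup_{m<n}F_m$ can destroy the hitting property, because for a fixed $\alpha$ the hits $F_n\cap\overline{B_{\alpha}}$ may all be one recurring point.

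This is not a removable technicality: without a further hypothesis the implication $(1)\Rightarrow(2)$ is simply false, so no argument can close the gap as stated. Take $X=\mathbb{Q}^2\cup\{p\}$, where $\mathbb{Q}^2$ carries its usual topology and the neighborhoods of $p$ are the sets $\{p\}\cup C$ with $C$ cofinite in $\mathbb{Q}^2$. This space is $\textsf{T}_1$, has no isolated points, is non-regular, and satisfies $\delta_{\theta}(X)=\aleph_0$ and $\pi_{\theta}(X)=\aleph_0\le\kappa$; yet $p$ lies in $\overline{U}$ for every nonempty open $U$, so the singleton $\{p\}$ is $\theta$-dense, and ONE wins $\gfin(\dense_{\theta},\dense^{gp}_{\theta})$ outright by playing $\{p\}$ in every inning: TWO's union is then finite, and no finite set is groupably $\theta$-dense (cofinitely many blocks of any partition indexed by $\omega$ are empty). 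So the theorem requires the supplementary hypothesis that the paper states explicitly in Theorem \ref{Separable} --- for each $\theta$-dense $D$ and each nonempty open $U$ the set $\overline{U}\cap D$ is infinite --- a hypothesis that holds automatically in the $\tau_c$-spaces used in your $(3)\Rightarrow(1)$, so that direction is unaffected. Under this hypothesis your argument can be completed, but the bookkeeping must be strengthened rather than appealing to the innings: put $(F_1,\ldots,F_k)$ into $\Psi(\alpha)$ only when $\{d_{(F_1,\ldots,F_{k-1},j)}:j\in F_k\}$ meets $\overline{B_{\alpha}}$ in a point not occurring in the earlier blocks. ONE's moves in the auxiliary game are still open covers of $X_{\mathcal{B}}$ precisely because each $D\cap\overline{B_{\alpha}}$ is infinite, and a play lost by ONE then yields $\bigl(F_n\setminus\bigcup_{m<n}F_m\bigr)\cap\overline{B_{\alpha}}\neq\emptyset$ for cofinitely many $n$; since every nonempty open $U$ admits some $B_{\alpha}\subseteq\overline{U}$, hence $\overline{B_{\alpha}}\subseteq\overline{U}$, the disjointified blocks are a legitimate witness to groupable $\theta$-density.
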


\begin{theorem}\label{cohenboundedreals} For infinite cardinal number $\kappa$ the following are equivalent:
\begin{enumerate}
\item{$\kappa < \textsf{add}(\mathcal{M})$.}
\item{For each topological space $(X,\tau)$ with $\delta_{\theta}(X) = \aleph_0$ and $\pi_{\theta}(X)\le \kappa$  ONE has no winning strategy in the game  ${\textsf G}_1(\dense_{\theta},\dense^{gp}_{\theta})$.}
\item{For each topological space $(X,\tau)$ with $\delta_{\theta}(X) = \aleph_0$ and $\pi_{\theta}(X)\le \kappa$, the selection principle  ${\textsf S}_1(\dense_{\theta},\dense^{gp}_{\theta})$ holds.}
\end{enumerate}
\end{theorem}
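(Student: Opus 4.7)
The plan is to follow the three-implication template of Theorem \ref{cohenreals}, adjusting the combinatorial core of $(1)\Rightarrow(2)$ to produce the groupable partition needed for $\dense_\theta^{gp}$, and closing the cycle through the $C_p$-theory translation used in \cite{COC2} and \cite{COC6}.

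For $(1)\Rightarrow(2)$, I would first argue via the constant-strategy device from Theorem \ref{dominatingreals1} (which applies here because $\dense_\theta^{gp}\subseteq\dense_\theta$) that every space satisfying the hypotheses is strongly $\theta$-separable. This lets me assume that each move of a strategy $F$ for ONE in $\gone(\dense_\theta,\dense_\theta^{gp})$ is a countable $\theta$-dense set, and I would build the tree of bijectively enumerated responses $(x_{(n_0,\dots,n_k)})$ exactly as in the proof of Theorem \ref{cohenreals}. Fixing a $\theta$-$\pi$-base $\{B_\alpha:\alpha<\kappa\}$, I obtain for each $\alpha$ the nowhere dense subset
\[
  D_\alpha = \{f\in{}^\omega\omega : (\forall k<\omega)\, x_{(f(0),\dots,f(k))}\notin\overline{B_\alpha}\}\subseteq{}^\omega\omega.
\]
The new task beyond Theorem \ref{cohenreals} is to find a branch $f$ together with an interval partition $\{I_n:n<\omega\}$ of $\omega$ such that $f\notin\bigcup_\alpha D_\alpha$ \emph{and}, for each $\alpha<\kappa$, $\{x_{(f\restriction k)}:k\in I_n\}\cap\overline{B_\alpha}\neq\emptyset$ for all but finitely many $n$. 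This is where the hypothesis $\kappa<\textsf{add}(\mathcal{M})$ is used. One approach is to exploit $\textsf{add}(\mathcal{M})=\min\{\mathfrak{b},\textsf{cov}(\mathcal{M})\}$: the $\textsf{cov}(\mathcal{M})$ side produces $f$ outside $\bigcup_\alpha D_\alpha$, while the $\mathfrak{b}$ side, applied to the $\kappa$ many functions recording the gaps between successive $\overline{B_\alpha}$-hits along branches, furnishes a single partition $\{I_n\}$ that works for every $\alpha$. Alternatively, Bartoszy\'nski's combinatorial characterization of $\textsf{add}(\mathcal{M})$ yields the pair $(f,\{I_n\})$ in a single step. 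The $F$-play along $f$, grouped by $\{I_n\}$, is then won by TWO.

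The implication $(2)\Rightarrow(3)$ is the standard constant-strategy extraction: convert $(D_n:n<\omega)$ into a history-free strategy prescribing $D_n$ in inning $n$ and use an $F$-play lost by ONE. For $(3)\Rightarrow(1)$, I would take $X\subseteq\reals$ with $|X|=\kappa$ and consider $(C_p(X),\tau_c)$. Corollary \ref{cor:cocountablepibase} gives $\pi_\theta=\kappa$ and Lemma \ref{lemma:density} gives $\delta_\theta=\aleph_0$, so (3) applies; Corollary \ref{cor:thetadense}(2) then transfers $\sone(\dense_\theta,\dense_\theta^{gp})$ back to $\sone(\dense,\dense^{gp})$ in the usual pointwise topology; the $C_p$-characterization from \cite{COC6} gives $\sone(\Omega,\Omega^{gp})$ on $X$; and the characterization of $\textsf{add}(\mathcal{M})$ via $\sone(\Omega,\Omega^{gp})$ in \cite{COC2} forces $\kappa<\textsf{add}(\mathcal{M})$.

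The main obstacle is the simultaneity in $(1)\Rightarrow(2)$: one must find a single branch $f$ and a single interval partition that handle all $\kappa$ members of the $\theta$-$\pi$-base at once, both for avoidance (to keep $f$ out of each $D_\alpha$) and for groupability (to make almost every block meet $\overline{B_\alpha}$). It is precisely the min-characterization of $\textsf{add}(\mathcal{M})$, or equivalently Bartoszy\'nski's combinatorial form, that binds these two tasks together; $\textsf{cov}(\mathcal{M})$ alone would not supply the partition, and $\mathfrak{b}$ alone would not supply the branch.
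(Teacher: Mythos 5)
The paper never writes out a proof of Theorem \ref{cohenboundedreals}: it is stated with the remark that the argument uses ideas similar to those of Theorems \ref{cohenreals} and \ref{dominatingreals1}. Your template is indeed the intended one -- the tree of countable $\theta$-dense moves and the $\theta$-$\pi$-base-indexed bad sets for $(1)\Rightarrow(2)$, the constant-strategy extraction for $(2)\Rightarrow(3)$, and the $(C_p(X),\tau_c)$ translation via Corollaries \ref{cor:cocountablepibase}, \ref{cor:thetadense} for $(3)\Rightarrow(1)$. But the core of your $(1)\Rightarrow(2)$ has a genuine gap. Avoiding the set $D_\alpha$ you defined guarantees, for each $\alpha$, only \emph{one} index $k$ with $x_{(f(0),\dots,f(k))}\in\overline{B_\alpha}$. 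Groupability demands that all but finitely many blocks of the partition meet $\overline{B_\alpha}$, hence that the branch $f$ hit $\overline{B_\alpha}$ \emph{infinitely often} for every $\alpha$: if $f$ has only finitely many hits for some $\alpha$, no interval partition whatsoever can succeed, and your ``functions recording the gaps between successive hits'' are not even total functions, so the $\mathfrak{b}$-half of your argument has nothing to dominate. As wired, the $\textsf{cov}(\mathcal{M})$-half delivers too little for the $\mathfrak{b}$-half to use; the appeal to ``Bartoszy\'nski's characterization in a single step'' is too vague to close this.

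The repair is small, but it is exactly the missing idea. Replace $D_\alpha$ by the larger set $D_\alpha'=\{f\in{}^{\omega}\omega:\ x_{(f(0),\dots,f(k))}\in\overline{B_\alpha}\mbox{ for at most finitely many }k\}$. Each $D_\alpha'$ is \emph{meager} (no longer nowhere dense): for fixed $m$, the set of $f$ having no hit at any level $\ge m$ is nowhere dense by your own Claim-argument (extend a given finite sequence to length $\ge m$, then use $\theta$-density of the corresponding $D_\sigma$ to extend one more step into $\overline{B_\alpha}$), and $D_\alpha'$ is the union of these sets over $m<\omega$. Since $\kappa<\textsf{add}(\mathcal{M})\le\textsf{cov}(\mathcal{M})$, choose $f\notin\bigcup_{\alpha<\kappa}D_\alpha'$ (one can also keep $f$ in the comeager set of branches along which TWO's moves are pairwise distinct, so that the blocks genuinely partition TWO's chosen set). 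Now every gap function $g_\alpha(n)=\min\{m>n:\ (\exists k\in[n,m))\ x_{(f(0),\dots,f(k))}\in\overline{B_\alpha}\}$ is total, and $\kappa<\textsf{add}(\mathcal{M})\le\mathfrak{b}$ yields an increasing $h$ eventually dominating every $g_\alpha$; the iterates $a_0=0$, $a_{n+1}=h(a_n)$ give the interval partition $I_n=[a_n,a_{n+1})$ with the required property. Two lesser points: your opening ``constant-strategy device'' is circular as stated (it presupposes that ONE has no winning strategy, which is what is being proved) and also unnecessary, since strong $\theta$-separability is literally the hypothesis $\delta_\theta(X)=\aleph_0$ in item (2); and in $(3)\Rightarrow(1)$ the transfer from $\sone(\dense,\dense^{gp})$ for $C_p(X)$ to the groupable covering property of $X$ is due to \cite{BCPT} rather than \cite{COC6}, with the identification of the critical cardinal as $\textsf{add}(\mathcal{M})$ then assembled from the $\mathfrak{b}$ and $\textsf{cov}(\mathcal{M})$ results of \cite{COC2} together with $\textsf{add}(\mathcal{M})=\min\{\mathfrak{b},\textsf{cov}(\mathcal{M})\}$.
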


Since the techniques for proof contain no new elements we report results on cardinal numbers related to the combinatorics of the real line and their relevance to $\theta$ versions of tightness and of the Frechet-Urysohn property
without proof. 
Lemma \ref{lem:tightness} records a core fact used in these proofs: 
\begin{lemma}\label{lem:tightness}
Let $(X,\tau)$ be a $\textsf{T}_3$-space in which each nonempty open set is uncountable. For a subset $C$ of $X$ and a point $x\in X\setminus C$, $x$ is in the $\theta$-closure of $C$ in topology $\tau_c$ if, and only if, $x$ is in the $\tau$-closure of $C$.
\end{lemma}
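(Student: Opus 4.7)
The plan is to mirror the argument already used in Lemma \ref{lemma:density}, but localized at the point $x$ instead of at every point. The two main ingredients are Lemma \ref{lemma:openclosure}, which identifies the $\tau_c$-closure of a basic $\tau_c$-open set $U\setminus A$ with the $\tau$-closure of $U$, and the standard fact that in a $\textsf{T}_3$-space the $\theta$-closure agrees with the ordinary closure, so that $y\in\overline{C}^\tau$ iff every $\tau$-neighborhood $U$ of $y$ satisfies $\overline{U}^\tau\cap C\neq\emptyset$.

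For the forward direction I would assume $x\in\textsf{cl}_\theta(C)$ computed in $\tau_c$ and take an arbitrary $\tau$-neighborhood $U$ of $x$. Since $\tau\subseteq\tau_c$, the set $U$ is also a $\tau_c$-neighborhood of $x$, so $\overline{U}^{\tau_c}\cap C\neq\emptyset$. Lemma \ref{lemma:openclosure} (applied with the trivial choice of countable removed set) yields $\overline{U}^{\tau_c}=\overline{U}^\tau$, so $\overline{U}^\tau\cap C\neq\emptyset$. As $U$ was an arbitrary $\tau$-neighborhood of $x$ and $(X,\tau)$ is $\textsf{T}_3$, this places $x$ in the $\tau$-closure of $C$.

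For the converse, assume $x\in\overline{C}^\tau$ and let $W=U\setminus A$ be an arbitrary basic $\tau_c$-neighborhood of $x$, with $U\in\tau$ containing $x$ and $A$ countable with $x\notin A$. By $\textsf{T}_3$-ness of $(X,\tau)$, choose a $\tau$-neighborhood $V$ of $x$ with $\overline{V}^\tau\subseteq U$; then $\overline{V}^\tau\cap C\neq\emptyset$ because $x\in\overline{C}^\tau$, so $\overline{U}^\tau\cap C\neq\emptyset$ as well. Invoking Lemma \ref{lemma:openclosure} once more, $\overline{W}^{\tau_c}=\overline{U}^\tau$, so $\overline{W}^{\tau_c}\cap C\neq\emptyset$. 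Since $W$ was arbitrary, $x\in\textsf{cl}_\theta(C)$ in the topology $\tau_c$.

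I do not anticipate a real obstacle: the only subtle point is being careful that Lemma \ref{lemma:openclosure} really applies to arbitrary $\tau_c$-basic neighborhoods $U\setminus A$ and yields $\overline{U\setminus A}^{\tau_c}=\overline{U}^\tau$, so that the countable set $A$ disappears from the computation. Everything else is a direct transcription of the density argument to the pointwise setting, with the $\textsf{T}_3$ hypothesis used exactly as in Lemma \ref{lemma:density} to pass between $\theta$-closure and ordinary closure in the topology $\tau$.
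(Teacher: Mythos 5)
Your proof is correct and matches what the paper intends: the paper states Lemma \ref{lem:tightness} without proof, remarking only that it is a core fact whose proof uses the same techniques as before, and your argument is precisely the localization at $x$ of the proof of Lemma \ref{lemma:density}, using Lemma \ref{lemma:openclosure} and the $\textsf{T}_3$ hypothesis in the same two places. One cosmetic remark: in the converse direction the regularity detour through $V$ with $\overline{V}^{\tau}\subseteq U$ is unnecessary, since $x\in\overline{C}^{\tau}$ already gives $U\cap C\neq\emptyset$ and hence $\overline{U}^{\tau}\cap C\neq\emptyset$.
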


In particular, Lemma \ref{lem:tightness} implies:
\begin{corollary}\label{cor:tightness}
Let $(X,\tau)$ be a $\textsf{T}_3$-space with all nonempty open sets uncountable.
\begin{enumerate}
\item{$(X,\tau)$ satisfies $\sone(\Omega_x,\Omega_x)$ if, and only if, $(X,\tau_c)$ satisfies $\sone(\Omega_x^{\theta},\Omega_x^{\theta})$.}
\item{$(X,\tau)$ satisfies $\sfin(\Omega_x,\Omega_x)$ if, and only if, $(X,\tau_c)$ satisfies $\sfin(\Omega_x^{\theta},\Omega_x^{\theta})$.}
\end{enumerate}
\end{corollary}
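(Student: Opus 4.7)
The plan is to deduce the corollary directly from Lemma~\ref{lem:tightness} by observing that, for each point $x\in X$, the two families $\Omega_x$ (computed in $(X,\tau)$) and $\Omega_x^{\theta}$ (computed in $(X,\tau_c)$) coincide as collections of subsets of $X\setminus\{x\}$. Once this identification is established, both equivalences are purely formal, since the selection principles $\sone(\mathcal{A},\mathcal{B})$ and $\sfin(\mathcal{A},\mathcal{B})$ depend only on the families $\mathcal{A}$ and $\mathcal{B}$ and not on the underlying topology.

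First, I would fix $x\in X$ and verify that $\Omega_x$ in $(X,\tau)$ equals $\Omega_x^{\theta}$ in $(X,\tau_c)$. Given $A\subseteq X\setminus\{x\}$, the defining property of $\Omega_x$ is $x\in\textsf{cl}_{\tau}(A)$, while that of $\Omega_x^{\theta}$ (in $\tau_c$) is $x\in\textsf{cl}_{\theta}(A)$ computed with respect to $\tau_c$. Lemma~\ref{lem:tightness} asserts the equivalence of these two conditions whenever $x\notin A$, which is automatic here since $A\subseteq X\setminus\{x\}$. Hence $\Omega_x=\Omega_x^{\theta}$ as set families.

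Once this set-theoretic equality is in place, part (1) follows: $\sone(\Omega_x,\Omega_x)$ asserts that for every sequence $(A_n:n<\omega)$ of members of $\Omega_x$ there exist $a_n\in A_n$ with $\{a_n:n<\omega\}\in\Omega_x$; substituting $\Omega_x^{\theta}$ for $\Omega_x$ in both occurrences yields exactly $\sone(\Omega_x^{\theta},\Omega_x^{\theta})$ in $(X,\tau_c)$. Part (2) follows by the same substitution in the definition of $\sfin$.

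The only substantive step is the set equality $\Omega_x=\Omega_x^{\theta}$, which in turn is exactly the content of Lemma~\ref{lem:tightness}. The $\textsf{T}_3$ and uncountable-open-set hypotheses enter only through that lemma (they are needed so that $\tau$-closures of open sets are controlled by $\tau_c$-closures of co-countable restrictions, as in Lemma~\ref{lemma:openclosure}); no further obstacle arises in passing from the lemma to the corollary.
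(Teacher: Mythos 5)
Your proposal is correct and takes essentially the same route as the paper: the paper offers no written-out argument beyond asserting that Lemma \ref{lem:tightness} implies Corollary \ref{cor:tightness}, and your proof supplies exactly the intended details, namely that the lemma (applied to sets $A\subseteq X\setminus\{x\}$, so $x\notin A$ automatically) gives the set-theoretic identity of $\Omega_x$ computed in $(X,\tau)$ with $\Omega_x^{\theta}$ computed in $(X,\tau_c)$, after which $\sone(\Omega_x,\Omega_x)$ and $\sone(\Omega_x^{\theta},\Omega_x^{\theta})$ (likewise the $\sfin$ versions) are literally the same statement because the selection principles depend only on the families involved.
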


As mentioned, 
the proofs of the upcoming theorems are with some adjustments very similar to those of the previous results. 
We leave the details to the reader.

\begin{theorem}\label{cohenstuff} For infinite cardinal number $\kappa$ the following are equivalent:
\begin{enumerate}
\item{$\kappa < \textsf{cov}(\mathcal{M})$.}
\item{For each topological space $(X,\tau)$ and point $x\in X$ with $t_{\theta}(X,x) = \aleph_0$ and $\chi_{\theta}(X,x)\le \kappa$  ONE has no winning strategy in the game  ${\textsf G}_{1}(\Omega^{\theta}_x,\Omega^{\theta}_x)$.}
\item{For each topological space $(X,\tau)$ and point $x\in X$ with $t_{\theta}(X,x) = \aleph_0$ and $\chi_{\theta}(X,x)\le \kappa$, the selection principle  ${\textsf S}_{1}(\Omega^{\theta}_x,\Omega^{\theta}_x)$ holds.}
\end{enumerate}
\end{theorem}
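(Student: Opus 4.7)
The plan is to mirror the three-part proof of Theorem~\ref{cohenreals}, replacing the family $\dense_{\theta}$ by $\Omega^{\theta}_x$ and the $\theta$-$\pi$-base of cardinality $\pi_{\theta}(X)$ by a local $\theta$-base $\{U_{\alpha}:\alpha<\kappa\}$ at $x$ of cardinality $\chi_{\theta}(X,x)$, with the role of strong $\theta$-separability played by countable $\theta$-tightness at $x$. The two transfers I will rely on are Corollary~\ref{cor:tightness}, which exchanges $\sone(\Omega_x,\Omega_x)$ on $(X,\tau)$ with $\sone(\Omega^{\theta}_x,\Omega^{\theta}_x)$ on $(X,\tau_c)$, and Theorem~\ref{thm:cps1}, which links $\sone(\Omega_{\mathbf 0},\Omega_{\mathbf 0})$ on $C_p(X)$ to $\sone(\Omega,\Omega)$ on a set of reals $X$.

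For $(1)\Rightarrow(2)$ I fix a strategy $F$ for ONE in $\gone(\Omega^{\theta}_x,\Omega^{\theta}_x)$. Using $t_{\theta}(X,x)=\aleph_0$, I may assume every legal move of $F$ is a countable subset of $X\setminus\{x\}$, enumerate each move bijectively, and construct the tree of partial $F$-plays indexed by finite sequences in ${}^{<\omega}\omega$ whose nodes carry labels $x_{(n_0,\ldots,n_k)}\in X\setminus\{x\}$. Fix a local $\theta$-base $\{U_{\alpha}:\alpha<\kappa\}$ at $x$ with the property that a subset $A$ of $X\setminus\{x\}$ lies in $\Omega^{\theta}_x$ iff $\overline{U_{\alpha}}\cap A\neq\emptyset$ for every $\alpha$. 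For each $\alpha<\kappa$ set
\[
  D_{\alpha}=\{f\in{}^{\omega}\omega : x_{(f(0),\ldots,f(n))}\notin \overline{U_{\alpha}}\ \text{for every}\ n<\omega\}.
\]
Since the set of labels extending any finite sequence $(n_0,\ldots,n_k)$ is the $F$-response at that node and hence belongs to $\Omega^{\theta}_x$, it meets $\overline{U_{\alpha}}$; this produces a basic open subset of ${}^{\omega}\omega$ below $[(n_0,\ldots,n_k)]$ disjoint from $D_{\alpha}$, so each $D_{\alpha}$ is nowhere dense exactly as in Theorem~\ref{cohenreals}. Applying $\kappa<\textsf{cov}(\mathcal{M})$ yields $f\in{}^{\omega}\omega\setminus\bigcup_{\alpha<\kappa}D_{\alpha}$, and the $F$-play along $f$ produces a countable set whose $\theta$-closure contains $x$, defeating $F$. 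The step $(2)\Rightarrow(3)$ is the standard constant-strategy reduction from Theorem~\ref{cohenreals}.

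For $(3)\Rightarrow(1)$ I take an arbitrary set of reals $X$ of cardinality $\kappa$ and work in $(C_p(X),\tau_c)$ at $\mathbf{0}$. Since $\chi(C_p(X),\mathbf{0})\le\kappa$ in $\tau$, Lemma~\ref{lemma:openclosure} gives $\chi_{\theta}(C_p(X),\mathbf{0})\le\kappa$ in $\tau_c$, and Lemma~\ref{lem:tightness} together with the Arkhangelskii--Pytkeev characterization of countable tightness of $C_p(X)$ gives $t_{\theta}(C_p(X),\mathbf{0})=\aleph_0$ in $\tau_c$ whenever all finite powers of $X$ are Lindel\"of. Then (3) delivers $\sone(\Omega^{\theta}_{\mathbf{0}},\Omega^{\theta}_{\mathbf{0}})$ for $(C_p(X),\tau_c)$; Corollary~\ref{cor:tightness} transfers this to $\sone(\Omega_{\mathbf{0}},\Omega_{\mathbf{0}})$ for $(C_p(X),\tau)$; Theorem~\ref{thm:cps1} yields $X\in\sone(\Omega,\Omega)$; and Theorem~4.8 of~\cite{COC2} concludes $\kappa<\textsf{cov}(\mathcal{M})$.

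The main obstacle I anticipate is verifying $t_{\theta}(C_p(X),\mathbf{0})=\aleph_0$ in $(3)\Rightarrow(1)$: an arbitrary set of reals of size $\kappa$ need not have all finite powers Lindel\"of, so the argument must first restrict to $X$ for which this holds (using the Cicho\'n inequality $\textsf{cov}(\mathcal{M})\le\mathfrak{d}$ and Theorem~\ref{thm:cpsfin}, which force $X\in\sfin(\Omega,\Omega)$ below $\mathfrak{d}$) or else substitute $C_p(X)$ by an auxiliary space that encodes the same combinatorial content and automatically carries countable tightness at the distinguished point.
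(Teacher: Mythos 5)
Your proof is correct and is precisely the argument the paper intends: the paper omits the proof of Theorem \ref{cohenstuff}, stating only that it follows the pattern of Theorems \ref{cohenreals} and \ref{dominatingreals1} with Lemma \ref{lem:tightness} and Corollary \ref{cor:tightness} supplying the transfer between $(X,\tau)$ and $(X,\tau_c)$, and that is exactly what you have done. The one thing to repair is your closing paragraph: the ``main obstacle'' you anticipate is vacuous. Every set of reals is second countable, hence all of its finite powers are (hereditarily) Lindel\"of, so by the Arkhangelskii--Pytkeev theorem $C_p(X)$ is countably tight at every point for \emph{every} $X\subseteq\reals$; consequently $t_{\theta}((C_p(X),\tau_c),\mathbf{0})=\aleph_0$ holds with no restriction on $X$ whatsoever. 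Moreover, the first workaround you float --- restricting attention to those $X$ of size $\kappa$ whose finite powers are Lindel\"of --- would actually break the proof rather than save it, since concluding $\kappa<\textsf{cov}(\mathcal{M})$ from Theorem 4.8 of \cite{COC2} requires that \emph{every} set of reals of cardinality $\kappa$ satisfy $\sone(\Omega,\Omega)$; and the suggested appeal to $\textsf{cov}(\mathcal{M})\le\mathfrak{d}$ together with Theorem \ref{thm:cpsfin} is circular, because at that stage of the argument nothing is yet known about how $\kappa$ compares to $\mathfrak{d}$. Deleting that final paragraph leaves a complete and correct proof.
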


\begin{theorem}\label{boundedstuff} For infinite cardinal number $\kappa$ the following are equivalent:
\begin{enumerate}
\item{$\kappa < \mathfrak{b}$.}
\item{For each topological space $(X,\tau)$ and point $x\in X$ with $t_{\theta}(X,x) = \aleph_0$ and $\chi_{\theta}(X,x)\le \kappa$  ONE has no winning strategy in the game  ${\textsf G}_{fin}(\Omega^{\theta}_x,\Omega^{\theta,gp}_x)$.}
\item{For each topological space $(X,\tau)$ and point $x\in X$ with $t_{\theta}(X,x) = \aleph_0$ and $\chi_{\theta}(X,x)\le \kappa$, the selection principle  ${\textsf S}_{fin}(\Omega^{\theta}_x,\Omega^{\theta,gp}_x)$ holds.}
\end{enumerate}
\end{theorem}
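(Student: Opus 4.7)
My plan is to parallel the proof of Theorem \ref{dominatingreals1}, replacing the $\pi_\theta$-base with a $\theta$-local base at $x$ and the appeal to Menger/$\mathfrak{d}$ with an appeal to Hurewicz/$\mathfrak{b}$. The selection principle $\sfin(\Omega^\theta_x,\Omega^{\theta,gp}_x)$ stands to tightness as $\sfin(\dense_\theta,\dense^{gp}_\theta)$ stands to density, so the combinatorics should transfer with only bookkeeping changes.

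For $(1)\Rightarrow(2)$, let $\sigma$ be a strategy for ONE in $\gfin(\Omega^\theta_x,\Omega^{\theta,gp}_x)$. A preliminary reduction (have ONE play one fixed $A\in\Omega^\theta_x$ in every inning) shows the space is \emph{strongly countably $\theta$-tight at $x$}, so we may assume each $\sigma$-move is a countable element of $\Omega^\theta_x$. Fix a $\theta$-local base $\{B_\alpha:\alpha<\kappa\}$ at $x$. Build the $\sigma$-tree as in Theorem \ref{dominatingreals1}, enumerating each response $\sigma(\{d_{(n)}:n\in F_1\},\dots,\{d_{(F_1,\dots,F_{k-1},n)}:n\in F_k\})$ bijectively as $\{d_{(F_1,\dots,F_k,n)}:n<\omega\}$, and define
\[
\Psi(\alpha) = \{(F_1,\dots,F_k) : \{d_{(F_1,\dots,F_{k-1},j)}:j\in F_k\}\cap\overline{B_\alpha}\neq\emptyset\},
\]
a subset of the set of finite sequences of finite subsets of $\omega$. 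Let $X_\mathcal{B}$ consist of the characteristic functions $\chi_{\Psi(\alpha)}$ inside the corresponding Cantor cube. Since $|X_\mathcal{B}|\le\kappa<\mathfrak{b}$, $X_\mathcal{B}$ has the Hurewicz property, so ONE has no winning strategy in $\gfin(\open,\open^{gp})$ on $X_\mathcal{B}$.

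Now define a strategy $\gamma$ for ONE of the Hurewicz game on $X_\mathcal{B}$ exactly as in the proof of Theorem \ref{dominatingreals1}; the basic-open-set families produced are genuine open covers of $X_\mathcal{B}$ since each $\sigma$-response is in $\Omega^\theta_x$ and hence meets $\overline{B_\alpha}$ for every $\alpha$. A $\gamma$-play lost by ONE yields TWO-moves $\mathcal{V}_n$ whose union is a groupable cover of $X_\mathcal{B}$, and the associated $\sigma$-play $D_\emptyset,F_1,D_1,F_2,\dots$ then satisfies: for each $\alpha<\kappa$, groupability forces $\chi_{\Psi(\alpha)}\in\mathcal{V}_n$ cofinitely, which by the definition of $\Psi(\alpha)$ translates to $F_n\cap\overline{B_\alpha}\neq\emptyset$ cofinitely. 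Thus $\bigcup_n F_n$ lies in $\Omega^{\theta,gp}_x$ with the natural inning-partition, and TWO wins. The implication $(2)\Rightarrow(3)$ is standard: apply (2) to the strategy that plays $A_n$ in inning $n$.

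For $(3)\Rightarrow(1)$, take $X\subseteq\reals$ of cardinality $\kappa$; then $(C_p(X),\tau_c)$ at $\mathbf 0$ has $t_\theta=\aleph_0$ and $\chi_\theta\le\kappa$, and (3) gives $\sfin(\Omega^\theta_{\mathbf 0},\Omega^{\theta,gp}_{\mathbf 0})$. A groupable analogue of Corollary \ref{cor:tightness} (routine from Lemma \ref{lem:tightness}) transfers this to $\sfin(\Omega_{\mathbf 0},\Omega_{\mathbf 0}^{gp})$ for $(C_p(X),\tau)$, which by standard $C_p$-theory forces the property $\sfin(\Omega,\Omega^{gp})$ on $X$ (\cite{COC6}). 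As any $\kappa$-sized set of reals then has this Hurewicz-type property, \cite{COC2} yields $\kappa<\mathfrak{b}$. I expect the main obstacle to be the careful bookkeeping linking groupability on the Cantor-cube side (cofinitely many $\mathcal{V}_n$ containing $\chi_{\Psi(\alpha)}$) to groupability on the $\theta$-tightness side (cofinitely many $F_n$ meeting $\overline{B_\alpha}$), together with establishing the groupable version of Corollary \ref{cor:tightness} needed for the reversal.
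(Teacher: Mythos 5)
The paper never writes out a proof of Theorem \ref{boundedstuff}: it is one of the results explicitly left to the reader as an adaptation of Theorems \ref{cohenreals} and \ref{dominatingreals1}, and your proposal is exactly that intended adaptation (the $\sigma$-tree, the sets $\Psi(\alpha)$, the Cantor-cube space $X_{\mathcal{B}}$ of size $\le\kappa<\mathfrak{b}$, hence Hurewicz, and the reversal through $(C_p(X),\tau_c)$ via a groupable analogue of Corollary \ref{cor:tightness}). The directions $(2)\Rightarrow(3)$ and $(3)\Rightarrow(1)$ are fine. But in $(1)\Rightarrow(2)$ there is a genuine gap, located exactly where the ``adjustments'' the paper alludes to would have to live: the translation of the auxiliary-game conclusion into membership in $\Omega^{\theta,gp}_x$. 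What a lost $\gamma$-play gives you is: for each $\alpha$, for all but finitely many $n$, $F_n\cap\overline{B_\alpha}\neq\emptyset$. This is \emph{not} the same as $\bigcup_n F_n\in\Omega^{\theta,gp}_x$. The sets $F_n$ need not be pairwise disjoint --- distinct nodes of the $\sigma$-tree may name the same point of $X$, since the sets played by $\sigma$ can overlap --- so ``the natural inning-partition'' is not a partition; and disjointifying, say replacing $F_n$ by $C_n=F_n\setminus\bigcup_{m<n}F_m$, can destroy the cofinite-meeting property, because all the meetings with $\overline{B_\alpha}$ may occur at one single point repeated in every inning.

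This is not repairable by bookkeeping alone, because the implication $(1)\Rightarrow(2)$ is false exactly as stated: take $X=\omega$ with the cofinite topology (a $\T_1$ space with no isolated points) and any $x\in X$. Here $\overline{U}=X$ for every nonempty open $U$, so $t_{\theta}(X,x)=\aleph_0$ and $\{X\}$ is a $\theta$-neighborhood base at $x$, giving $\chi_{\theta}(X,x)\le\kappa$; yet for $c\neq x$ the singleton $\{c\}$ lies in $\Omega^{\theta}_x$, and ONE wins $\gfin(\Omega^{\theta}_x,\Omega^{\theta,gp}_x)$ by playing $\{c\}$ in every inning, since TWO's union is then finite, and no finite set admits an $\omega$-indexed partition into disjoint finite sets meeting each $\overline{U}$ cofinitely often. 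Taking $\kappa=\aleph_0<\mathfrak{b}$ refutes $(1)\Rightarrow(2)$. So any correct proof must add a hypothesis of the kind the paper itself inserts in Theorem \ref{Separable}, namely that $\overline{U}\cap A$ is infinite for each $A\in\Omega^{\theta}_x$ and each neighborhood $U$ of $x$. Under that hypothesis your argument can be completed: redefine $\Psi(\alpha)$ so that $(F_1,\dots,F_k)\in\Psi(\alpha)$ requires the $k$-th selection to meet $\overline{B_\alpha}$ in a point \emph{not already named} along the node; the infinitude hypothesis keeps ONE's $\gamma$-moves covers of $X_{\mathcal{B}}$, and a lost play then makes the disjoint sets $C_n=F_n\setminus\bigcup_{m<n}F_m$ meet every $\overline{B_\alpha}$ cofinitely, which genuinely yields $\bigcup_nF_n\in\Omega^{\theta,gp}_x$. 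A secondary imprecision of the same flavor: if TWO's winning condition in the auxiliary game is that $\bigcup_n\mathcal{V}_n$ is a groupable cover, then groupability supplies \emph{some} partition, not the inning partition, so it does not ``force $\chi_{\Psi(\alpha)}\in\bigcup\mathcal{V}_n$ cofinitely''; you should instead play against the classical Hurewicz game, whose winning condition for TWO is that every point lies in $\bigcup\mathcal{V}_n$ for all but finitely many $n$, citing the theorem that ONE has no winning strategy in that game precisely on Hurewicz spaces.
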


\begin{theorem}\label{boundedcohenstuff} For infinite cardinal number $\kappa$ the following are equivalent:
\begin{enumerate}
\item{$\kappa < \textsf{add}({\mathcal{M}})$.}
\item{For each topological space $(X,\tau)$ and point $x\in X$ with $t_{\theta}(X,x) = \aleph_0$ and $\chi_{\theta}(X,x)\le \kappa$  ONE has no winning strategy in the game  ${\textsf G}_{1}(\Omega^{\theta}_x,\Omega^{\theta,gp}_x)$.}
\item{For each topological space $(X,\tau)$ and point $x\in X$ with $t_{\theta}(X,x) = \aleph_0$ and $\chi_{\theta}(X,x)\le \kappa$, the selection principle  ${\textsf S}_{1}(\Omega^{\theta}_x,\Omega^{\theta,gp}_x)$ holds.}
\end{enumerate}
\end{theorem}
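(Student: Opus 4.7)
The plan is to establish the cycle $(1)\Rightarrow(2)\Rightarrow(3)\Rightarrow(1)$, adapting the templates of Theorems \ref{cohenreals} and \ref{cohenboundedreals} from the density families to the tightness families $\Omega_x^\theta$ and $\Omega_x^{\theta,gp}$. The implication $(2)\Rightarrow(3)$ is routine: given a sequence $(A_n:n<\omega)$ of members of $\Omega_x^\theta$, let ONE play $A_n$ in the $n$th inning; by $(2)$ this is not a winning strategy, so a losing play extracts the required selection witnessing $\sone(\Omega_x^\theta,\Omega_x^{\theta,gp})$.

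For $(1)\Rightarrow(2)$ I would let $F$ be a strategy for ONE in $\gone(\Omega_x^\theta,\Omega_x^{\theta,gp})$. Since $t_\theta(X,x)=\aleph_0$, each value of $F$ may be replaced by a countable element of $\Omega_x^\theta$; fix bijective enumerations and build the usual tree of partial $F$-plays $\{x_{(f(0),\ldots,f(n))} : f\in\omega^\omega,\ n<\omega\}$ as in the proof of Theorem \ref{cohenreals}. Fix also a $\theta$-neighborhood base $\{B_\alpha : \alpha<\kappa\}$ at $x$. For each $\alpha<\kappa$ and each candidate block-structure parameter (a strictly increasing finite sequence of potential block endpoints in $\omega$), define a subset of $\omega^\omega$ consisting of branches along which the groupability condition for $B_\alpha$ eventually fails; each such set is nowhere dense because every tree node's successor set lies in $\Omega_x^\theta$ and thus meets $\overline{B_\alpha}$. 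Since $\kappa\cdot\aleph_0=\kappa<\mathrm{add}(\mathcal M)$, the union of these meager sets remains meager, so some $f\in\omega^\omega$ lies outside the union; the induced $F$-play has response set in $\Omega_x^\theta$ and, from the combinatorial data carried by $f$, inherits a partition into finite blocks witnessing membership in $\Omega_x^{\theta,gp}$.

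For $(3)\Rightarrow(1)$ I would let $X\subseteq\mathbb R$ be a set of reals of cardinality $\kappa$, chosen so that every finite power is Lindel\"of. Then $\textsf{C}_p(X)$ is a $\textsf{T}_3$-space in which every nonempty open set is uncountable, and by Arhangel'skii's theorem $t(\textsf{C}_p(X),\mathbf 0)=\aleph_0$. Lemma \ref{lem:tightness} transfers this to $t_\theta(\textsf{C}_p(X),\mathbf 0)=\aleph_0$ in the refinement $\tau_c$, while a character analog of Corollary \ref{cor:cocountablepibase} yields $\chi_\theta(\textsf{C}_p(X),\mathbf 0)\le\kappa$ in $\tau_c$. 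Applying (3) gives $\sone(\Omega_{\mathbf 0}^\theta,\Omega_{\mathbf 0}^{\theta,gp})$ in $(\textsf{C}_p(X),\tau_c)$, and the groupable analog of Corollary \ref{cor:tightness} returns it to $\sone(\Omega_{\mathbf 0},\Omega_{\mathbf 0}^{gp})$ in $(\textsf{C}_p(X),\tau)$. By the $C_p$-duality (the groupable analog of Theorem \ref{thm:cps1}), $X$ itself satisfies $\sone(\Omega,\Omega^{gp})$, i.e.\ the Gerlits--Nagy property. As this conclusion holds for every set of reals of size $\kappa$, Scheepers's theorem identifying $\mathrm{add}(\mathcal M)$ as the critical cardinal of $\sone(\Omega,\Omega^{gp})$ yields $\kappa<\mathrm{add}(\mathcal M)$.

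The hardest step is $(1)\Rightarrow(2)$: the meager sets must be set up so that a single branch $f$ missing their union simultaneously secures both the selection (each $\overline{B_\alpha}$ is met) and a \emph{uniform} finite-block decomposition witnessing groupability against all $\kappa$ neighborhoods at once. This is precisely where the identity $\mathrm{add}(\mathcal M)=\min(\mathfrak b,\mathrm{cov}(\mathcal M))$ is exploited, with the $\mathrm{cov}(\mathcal M)$-component controlling selectivity and the bounding component controlling the common block structure. A secondary subtlety in $(3)\Rightarrow(1)$ is verifying the tightness hypothesis of (3): this requires the character analog of Corollary \ref{cor:cocountablepibase} together with a choice of $X$ whose finite powers are Lindel\"of, so that Arhangel'skii's theorem becomes applicable.
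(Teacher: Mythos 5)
Your overall architecture---the cycle $(1)\Rightarrow(2)\Rightarrow(3)\Rightarrow(1)$, with the tree-of-partial-plays argument for $(1)\Rightarrow(2)$ and the $\tau_c$/$C_p$ transfer for $(3)\Rightarrow(1)$---is exactly the adaptation of Theorems \ref{cohenreals} and \ref{dominatingreals1} that the paper intends (it leaves this proof to the reader). Your $(2)\Rightarrow(3)$ is the standard argument, and $(3)\Rightarrow(1)$ is essentially right, with two small remarks: the requirement that $X$ have all finite powers Lindel\"of is vacuous, since every set of reals is separable metrizable and hence all of its finite powers are Lindel\"of, so countable tightness of $C_p(X)$ at $\mathbf{0}$ is automatic; and the character transfer you postulate as ``a character analog of Corollary \ref{cor:cocountablepibase}'' is already in the paper as Corollary \ref{cor:cocountablechi}. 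The groupable analogue of Corollary \ref{cor:tightness} that you invoke does hold, but note that it uses regularity of $(C_p(X),\tau)$: to pass from ``all but finitely many blocks meet $\overline{U}$'' to ``all but finitely many blocks meet $U$'' one shrinks $U$ to an open $V\ni x$ with $\overline{V}\subseteq U$.

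The genuine gap is in $(1)\Rightarrow(2)$. The sets you propose---for fixed $\alpha$ and a block parameter, ``branches along which the groupability condition for $B_\alpha$ eventually fails''---are in general \emph{not} nowhere dense, and your justification (each node's successor set lies in $\Omega_x^{\theta}$, hence meets $\overline{B_\alpha}$) does not prove that they are. A block's failure to meet $\overline{B_\alpha}$ is forced by a finite extension whenever ONE's move at a node contains points outside $\overline{B_\alpha}$; iterating, the set of branches along which infinitely many blocks miss $\overline{B_\alpha}$ is typically comeager. So groupability cannot be extracted from a category/avoidance argument alone, and the claim that a single branch missing a union of meager sets ``simultaneously secures'' the block decomposition is the step that fails. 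The repair is the two-phase argument your closing paragraph names but never executes. Phase one, using only $\kappa<\textsf{cov}(\mathcal{M})$: for $\alpha<\kappa$ and $k<\omega$ put
\[
  D_{\alpha,k}=\{f\in\;^{\omega}\omega : x_{(f(0),\ldots,f(n))}\notin\overline{B_\alpha}\mbox{ for all }n\ge k\};
\]
these \emph{are} nowhere dense by exactly your reasoning, so some branch $f$ avoids all of them, and along $f$ TWO's responses meet each $\overline{B_\alpha}$ \emph{infinitely often}. Phase two, using $\kappa<\mathfrak{b}$: only now, with $f$ fixed, define $g_\alpha(n)$ as the least $m>n$ such that $x_{(f(0),\ldots,f(j))}\in\overline{B_\alpha}$ for some $j$ with $n<j\le m$; choose a strictly increasing $g$ with $g_\alpha\le^* g$ for all $\alpha<\kappa$, set $n_0=0$ and $n_{k+1}=g(n_k)$, and let $C_k$ consist of TWO's responses at innings in $(n_k,n_{k+1}]$. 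Then for each $\alpha$ all but finitely many $C_k$ meet $\overline{B_\alpha}$, and since every open $U\ni x$ admits $\alpha$ with $B_\alpha\subseteq\overline{U}$, hence $\overline{B_\alpha}\subseteq\overline{U}$, this partition witnesses membership in $\Omega_x^{\theta,gp}$. The two phases are precisely the two halves of $\textsf{add}(\mathcal{M})=\min(\mathfrak{b},\textsf{cov}(\mathcal{M}))$; citing that identity, as you do, is not a substitute for carrying out the second phase.
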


\begin{theorem}\label{dominatingstuff} For infinite cardinal number $\kappa$ the following are equivalent:
\begin{enumerate}
\item{$\kappa < \mathfrak{d}$.}
\item{For each topological space $(X,\tau)$ and point $x\in X$ with $t_{\theta}(X,x) = \aleph_0$ and $\chi_{\theta}(X,x)\le \kappa$  ONE has no winning strategy in the game  ${\textsf G}_{fin}(\Omega^{\theta}_x,\Omega^{\theta}_x)$.}
\item{For each topological space $(X,\tau)$ and point $x\in X$ with $t_{\theta}(X,x) = \aleph_0$ and $\chi_{\theta}(X,x)\le \kappa$, the selection principle  ${\textsf S}_{fin}(\Omega^{\theta}_x,\Omega^{\theta}_x)$ holds.}
\end{enumerate}
\end{theorem}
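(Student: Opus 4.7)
The plan is to mirror the proof of Theorem~\ref{dominatingreals1}, substituting the local family $\Omega^{\theta}_x$ for the global family $\dense_{\theta}$, and substituting a family $\{B_{\alpha}:\alpha<\kappa\}$ of open neighborhoods of $x$ witnessing $\chi_{\theta}(X,x)\le\kappa$ for the global $\theta$-$\pi$-base. The crucial property to be exploited is that $A\in\Omega^{\theta}_x$ if and only if $\overline{B_{\alpha}}\cap A\neq\emptyset$ for every $\alpha<\kappa$, so that the combinatorial skeleton of the global argument transcribes directly to the local setting.

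For $(1)\Rightarrow(2)$, given a strategy $\sigma$ for ONE in $\gfin(\Omega^{\theta}_x,\Omega^{\theta}_x)$, the hypothesis $t_{\theta}(X,x)=\aleph_0$ lets me replace each response by a countable subset that still lies in $\Omega^{\theta}_x$. I would then enumerate the tree of $\sigma$-responses bijectively as $\{d_{(H_1,\ldots,H_k,n)}:n<\omega\}$ and, for each $\alpha<\kappa$, set $\Psi(\alpha)\subseteq{}^{<\omega}([\omega]^{<\aleph_0})$ to be the collection of finite sequences whose terminal finite-index set picks a point of $\overline{B_{\alpha}}$. As in the density proof, at every partial history the corresponding $\sigma$-response lies in $\Omega^{\theta}_x$ and so meets $\overline{B_{\alpha}}$, guaranteeing that $\Psi(\alpha)$ has the required density of extensions. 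Form the subspace $X_{\mathcal{B}}$ of the Cantor cube consisting of the characteristic functions of the $\Psi(\alpha)$; since $|X_{\mathcal{B}}|\le\kappa<\mathfrak{d}$, $X_{\mathcal{B}}$ is Menger (by Hurewicz's theorem), hence ONE has no winning strategy in $\gfin(\open,\open)$ on $X_{\mathcal{B}}$. Defining the translated strategy $\gamma$ exactly as in the density proof and converting a lost $\gamma$-play into finite sets $F_n\subseteq\sigma(F_1,\ldots,F_{n-1})$, the union $\bigcup_n F_n$ meets every $\overline{B_{\alpha}}$ and so belongs to $\Omega^{\theta}_x$.

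The implication $(2)\Rightarrow(3)$ is routine: given $(A_n:n<\omega)$ from $\Omega^{\theta}_x$, let $\sigma$ play $A_n$ in inning $n$ and extract finite $F_n\subseteq A_n$ from a play lost by ONE. For $(3)\Rightarrow(1)$, take $X\subseteq\reals$ of cardinality $\kappa$ and work in $(C_p(X),\tau_c)$ at the point $\mathbf{0}$. The space $(C_p(X),\tau)$ is $\textsf{T}_3$ with every nonempty open set uncountable, and the Arhangelskii-style duality for $C_p(X)$ yields $\chi(C_p(X),\mathbf{0})=\kappa$ and $t(C_p(X),\mathbf{0})=\aleph_0$; Lemma~\ref{lem:tightness} transfers these to $\chi_{\theta}\le\kappa$ and $t_{\theta}=\aleph_0$ at $\mathbf{0}$ in $(C_p(X),\tau_c)$. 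Applying (3) and then Corollary~\ref{cor:tightness}, $(C_p(X),\tau)$ satisfies $\sfin(\Omega_{\mathbf{0}},\Omega_{\mathbf{0}})$; by the appropriate theorem of \cite{COC6}, $X$ has $\sfin(\Omega,\Omega)$, and Theorem~4.6 of \cite{COC2} forces $\kappa<\mathfrak{d}$.

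The main obstacle I anticipate is being explicit about the exact content of $\chi_{\theta}(X,x)\le\kappa$ — namely a family of at most $\kappa$ open neighborhoods of $x$ whose $\tau$-closures jointly detect $\theta$-cluster-point status at $x$ — so that the $\Psi(\alpha)$ combinatorics and the translation to the Menger game on $X_{\mathcal{B}}$ transcribe verbatim from the density setting. A secondary technical point is a local analogue of Corollary~\ref{cor:cocountablepibase}: for $\textsf{T}_3$-spaces in which every nonempty open set is uncountable, one must verify that the $\theta$-character of $x$ in the refined topology $\tau_c$ is bounded by the ordinary character of $x$ in $\tau$, which is what supplies the hypothesis $\chi_{\theta}\le\kappa$ in the $(3)\Rightarrow(1)$ application to $C_p(X)$.
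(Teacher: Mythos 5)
Your proposal is correct and matches the approach the paper intends: the paper omits this proof, stating only that it follows from the argument for Theorem \ref{dominatingreals1} with the local transfer facts (Lemma \ref{lem:tightness}, Corollaries \ref{cor:tightness} and \ref{cor:cocountablechi}), and your proposal is exactly that adaptation, correctly resting on the key observation that $A\in\Omega^{\theta}_x$ if and only if $\overline{B_{\alpha}}\cap A\neq\emptyset$ for all members $B_{\alpha}$ of a $\theta$-neighborhood base at $x$. The only slip is cosmetic: the $C_p$-duality step in $(3)\Rightarrow(1)$ is the paper's Theorem \ref{thm:cpsfin} (due to \cite{AVA1}), not a theorem of \cite{COC6}, and the ``local analogue'' you flag as needing verification is already recorded in the paper as Corollary \ref{cor:cocountablechi}.
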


Expanding
to convergence properties we introduce the following notions: For a point $x$ of the space $X$, a family $\mathcal{N}$ of nonempty open sets is a neighborhood base for $x$ if for each nonempty open $U$ set with $x\in U$ there is an $N\in\mathcal{N}$ with $x\in N\subseteq U$. The minimal cardinality of a neighborhood base for $x$ is denoted $\chi(X,x)$, and is said to be the \emph{character} of $X$ at $x$. By analogy, a family $\mathcal{N}$ of nonempty open subsets of $X$ is said to be a $\theta$ \emph{neighborhood base} for $X$ at $x$ if  for each nonempty open $U$ set with $x\in U$ there is an $N\in\mathcal{N}$ with $x\in N\subseteq \overline{U}$. We define
\[
  \chi_{\theta}(X,x) = \min\{\vert\mathcal{N}\vert:\; \mathcal{N} \mbox{ is a } \theta - \mbox{neighborhood base for }X \mbox{ at }x\}.
\]
\begin{corollary}\label{cor:cocountablechi}
Let $(X,\tau)$ be a $\textsf{T}_3$-space in which each nonempty open set is uncountable. 
Then $\chi_{\theta}((X,\tau_c),x) = \chi((X,\tau),x)$.
\end{corollary}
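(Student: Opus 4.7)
The plan is to mimic the two-step comparison used in Corollary \ref{cor:cocountablepibase}, but with the local analogues ($\chi$ in place of $\pi$) and using Lemma \ref{lemma:openclosure} and the $\textsf{T}_3$ hypothesis at the key spot.

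First I would establish $\chi_{\theta}((X,\tau_c),x)\le\chi((X,\tau),x)$. Let $\mathcal{N}$ be a $\tau$-neighborhood base at $x$ of minimum cardinality. Since $\tau\subseteq\tau_c$ each $N\in\mathcal{N}$ is a nonempty $\tau_c$-open neighborhood of $x$. Given any $\tau_c$-open set $U\ni x$, choose a $\tau_c$-basic open neighborhood $V\setminus C\subseteq U$ of $x$ with $V\in\tau$ and $C$ countable. By $\textsf{T}_3$ pick $W\in\tau$ with $x\in W$ and $\overline{W}^{\tau}\subseteq V$, and then $N\in\mathcal{N}$ with $x\in N\subseteq W$. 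By Lemma \ref{lemma:openclosure}, $\overline{V\setminus C}^{\tau_c}=\overline{V}^{\tau}$, so
\[
  x\in N\subseteq\overline{W}^{\tau}\subseteq V\subseteq\overline{V\setminus C}^{\tau_c}\subseteq\overline{U}^{\tau_c},
\]
showing $\mathcal{N}$ is a $\theta$-neighborhood base for $x$ in $\tau_c$.

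Next I would prove $\chi((X,\tau),x)\le\chi_{\theta}((X,\tau_c),x)$. Fix a $\theta$-neighborhood base $\mathcal{B}$ for $x$ in $\tau_c$ of minimum cardinality; without loss of generality each $B\in\mathcal{B}$ has the form $B=V_B\setminus C_B$ with $V_B\in\tau$ and $C_B$ countable. Set $\mathcal{N}=\{V_B:B\in\mathcal{B}\}$; note $|\mathcal{N}|\le|\mathcal{B}|$ and $x\in V_B$ for every $B$. Given $U\in\tau$ with $x\in U$, use $\textsf{T}_3$ to find $W\in\tau$ with $x\in W\subseteq\overline{W}^{\tau}\subseteq U$. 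Since $W$ is $\tau_c$-open, there is $B\in\mathcal{B}$ with $x\in B\subseteq\overline{W}^{\tau_c}=\overline{W}^{\tau}$ by Lemma \ref{lemma:openclosure}. I claim $V_B\subseteq\overline{W}^{\tau}$, which gives $x\in V_B\subseteq U$ and finishes the proof.

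The main obstacle is this last containment, since a priori we only know $V_B\setminus C_B\subseteq\overline{W}^{\tau}$. Here the uncountability hypothesis does the real work: if some $y\in V_B$ failed to lie in $\overline{W}^{\tau}$, pick a $\tau$-open neighborhood $N_y$ of $y$ disjoint from $\overline{W}^{\tau}$. Then $V_B\cap N_y$ is a nonempty $\tau$-open set, hence uncountable, yet $(V_B\cap N_y)\setminus C_B\subseteq(V_B\setminus C_B)\cap N_y\subseteq\overline{W}^{\tau}\cap N_y=\emptyset$, forcing $V_B\cap N_y\subseteq C_B$, a countable set---a contradiction. Thus $V_B\subseteq\overline{W}^{\tau}\subseteq U$, so $\mathcal{N}$ is a $\tau$-neighborhood base at $x$ and the two cardinal invariants agree.
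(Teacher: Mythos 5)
Your proof is correct. The paper actually states Corollary \ref{cor:cocountablechi} without proof, and your argument is exactly the local analogue of the paper's proof of Corollary \ref{cor:cocountablepibase}: one inequality is nearly immediate because $\tau\subseteq\tau_c$ together with Lemma \ref{lemma:openclosure}, and the other is obtained by passing from each basic $\tau_c$-set $V_B\setminus C_B$ to a $\tau$-open set --- where the paper uses $\textsf{int}(\textsf{cl}(V))$ without comment, your uncountability argument showing $V_B\subseteq\overline{W}^{\tau}$ supplies the same step explicitly, so this is essentially the same approach.
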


\begin{lemma}\label{lem:FU}
Let $(X,\tau)$ be a $\textsf{T}_3$-space in which each nonempty open set is uncountable. For a subset $A$ of $X$ and a point $x\in X$, a sequence in $A$ $\theta$ converges to $x$ in the $\tau_c$ topology if, and only if, it converges to $x$ in the $\tau$ topology.
\end{lemma}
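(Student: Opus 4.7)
The plan is to prove both implications using the basic form of $\tau_c$-neighborhoods together with Lemma \ref{lemma:openclosure}, which identifies the $\tau_c$-closure of a basic open set $U\setminus C$ with the $\tau$-closure of $U$. The $T_3$ assumption is needed only in the forward direction, to shrink an arbitrary $\tau$-neighborhood to one whose $\tau$-closure still sits inside it.

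For the forward direction, assume a sequence $(a_n)$ in $A$ $\theta$-converges to $x$ in $\tau_c$. Given any $\tau$-neighborhood $V$ of $x$, use $T_3$ to pick $U\in\tau$ with $x\in U\subseteq \overline{U}^{\tau}\subseteq V$. Since $\tau\subseteq \tau_c$, $U$ is also a $\tau_c$-neighborhood of $x$, so by $\theta$-convergence in $\tau_c$ the set $\{n:a_n\notin \overline{U}^{\tau_c}\}$ is finite. By Lemma \ref{lemma:openclosure} (applied with the countable set empty), $\overline{U}^{\tau_c} = \overline{U}^{\tau}\subseteq V$, so $\{n:a_n\notin V\}$ is also finite, and $(a_n)$ converges to $x$ in $\tau$.

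For the backward direction, assume $(a_n)$ converges to $x$ in $\tau$. Let $W$ be an arbitrary $\tau_c$-neighborhood of $x$; choose a basic $\tau_c$-open set $U\setminus C\subseteq W$ containing $x$, where $U\in\tau$ and $C$ is countable. Then $\overline{W}^{\tau_c}\supseteq \overline{U\setminus C}^{\tau_c} = \overline{U}^{\tau}\supseteq U$ by Lemma \ref{lemma:openclosure}. Since $U$ is a $\tau$-neighborhood of $x$, $\{n:a_n\notin U\}$ is finite, and hence so is $\{n:a_n\notin \overline{W}^{\tau_c}\}$. Therefore $(a_n)$ $\theta$-converges to $x$ in $\tau_c$.

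There is no essential obstacle here; the argument is a routine bookkeeping exercise once Lemma \ref{lemma:openclosure} is in hand. The only point to be careful about is using $T_3$ to obtain a $\tau$-open set with $\tau$-closure contained in the target neighborhood in the forward direction, since without this one cannot convert the $\theta$-convergence bound (which only controls membership in the closure $\overline{U}^{\tau_c}$) into an ordinary convergence bound (which requires membership in the neighborhood itself).
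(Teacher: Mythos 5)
Your proof is correct. The paper in fact leaves this lemma unproved (``We leave the details to the reader''), and your argument---both directions reduced to Lemma \ref{lemma:openclosure}, with the $\textsf{T}_3$ hypothesis invoked exactly once, to shrink an arbitrary $\tau$-neighborhood $V$ to a $\tau$-open $U$ with $\overline{U}^{\tau}\subseteq V$ before applying $\theta$-convergence---is precisely the routine argument the authors intend.
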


\begin{corollary}\label{cor:FU} 
Let $(X,\tau)$ be a $\textsf{T}_3$-space with all nonempty open sets uncountable.
\begin{enumerate}
\item{$(X,\tau)$ satisfies $\sone(\Omega_x,\Gamma_x)$ if, and only if, $(X,\tau_c)$ satisfies $\sone(\Omega_x^{\theta},\Gamma_x^{\theta})$.}
\item{$(X,\tau)$ satisfies $\sfin(\Omega_x,\Gamma_x)$ if, and only if, $(X,\tau_c)$ satisfies $\sfin(\Omega_x^{\theta},\Gamma_x^{\theta})$.}
\end{enumerate}
\end{corollary}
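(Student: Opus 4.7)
The plan is to derive Corollary \ref{cor:FU} as an immediate formal consequence of Lemma \ref{lem:FU} combined with Lemma \ref{lem:tightness}, in exact analogy with how Corollary \ref{cor:tightness} was obtained from Lemma \ref{lem:tightness}. The point is that these two lemmas give set equalities (not merely implications in one direction) between the relevant families computed in $(X,\tau)$ and in $(X,\tau_c)$, after which both biconditionals in the corollary become tautologies.

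Concretely, I would first observe that Lemma \ref{lem:tightness}, applied to every $A\subseteq X\setminus\{x\}$, yields the identity
\[
 \Omega_x \;=\; \Omega_x^{\theta}
\]
as subfamilies of $\mathcal{P}(X\setminus\{x\})$, where the left-hand side is computed in $\tau$ and the right-hand side in $\tau_c$. Next, Lemma \ref{lem:FU}, applied to subsets $A\subseteq X\setminus\{x\}$, yields
\[
 \Gamma_x \;=\; \Gamma_x^{\theta}
\]
again as subfamilies of $\mathcal{P}(X\setminus\{x\})$, with the left side computed in $\tau$ and the right in $\tau_c$. Here I use that ``$A$ converges to $x$'' is defined in the paper directly in terms of the set $A$ (``$A\setminus U$ is finite for every neighbourhood $U$''), and likewise for $\theta$-convergence, so the equivalence of Lemma \ref{lem:FU} transfers verbatim from sequences to the set formulation used in the definitions of $\Gamma_x$ and $\Gamma_x^{\theta}$.

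Once these two equalities are in hand, item (1) is just the assertion that the selection principle $\sone(\mathcal{A},\mathcal{B})$ depends only on the pair of families $(\mathcal{A},\mathcal{B})$, which is true by definition: both sides of item (1) ask, given a sequence of elements from the common family $\Omega_x = \Omega_x^{\theta}$, to produce a single-choice selector whose range lies in the common family $\Gamma_x = \Gamma_x^{\theta}$. The same remark handles item (2) for $\sfin$, with finite subsets $B_n\subseteq A_n$ in place of single points $b_n\in A_n$; the union $\bigcup_n B_n$ lies in $\Gamma_x$ in $\tau$ iff it lies in $\Gamma_x^{\theta}$ in $\tau_c$, by the same Lemma \ref{lem:FU}. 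The only substantive content, and hence the only potential obstacle, therefore lies in Lemma \ref{lem:FU} itself, whose proof the authors suppress; granted that lemma, the corollary is purely formal.
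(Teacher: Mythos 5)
Your proposal is correct and matches the paper's (omitted) intended derivation: just as Corollary \ref{cor:tightness} is drawn from Lemma \ref{lem:tightness}, the corollary here follows formally from the identifications $\Omega_x=\Omega_x^{\theta}$ (Lemma \ref{lem:tightness}) and $\Gamma_x=\Gamma_x^{\theta}$ (Lemma \ref{lem:FU}, read in the paper's set-based formulation of convergence, where the $T_3$ hypothesis supplies the nontrivial direction), after which both instances of $\sone$ and $\sfin$ are literally the same assertion about the same pair of families.
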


\begin{theorem}\label{gammastuff} For infinite cardinal number $\kappa$ the following are equivalent:
\begin{enumerate}
\item{$\kappa < \mathfrak{p}$.}
\item{For each topological space $(X,\tau)$ and point $x\in X$ with $t_{\theta}(X,x) = \aleph_0$ and $\chi_{\theta}(X,x)\le \kappa$  ONE has no winning strategy in the game  ${\textsf G}_{1}(\Omega^{\theta}_x,\Gamma^{\theta}_x)$.}
\item{For each topological space $(X,\tau)$ and point $x\in X$ with $t_{\theta}(X,x) = \aleph_0$ and $\chi_{\theta}(X,x)\le \kappa$, the selection principle  ${\textsf S}_{1}(\Omega^{\theta}_x,\Gamma^{\theta}_x)$ holds.}
\end{enumerate}
\end{theorem}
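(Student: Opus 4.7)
The plan is to follow the three-implication template of Theorems~\ref{cohenreals} and \ref{dominatingreals1}, with the role previously played by $\textsf{cov}(\mathcal{M})$ and $\mathfrak{d}$ now taken by Bell's characterization of $\mathfrak{p}$ as the least cardinal at which Martin's Axiom for $\sigma$-centered posets fails.

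For $(1) \Rightarrow (2)$, I would start with a strategy $F$ for ONE in $\gone(\Omega^{\theta}_x, \Gamma^{\theta}_x)$. As in the opening of Theorem~\ref{dominatingreals1}, the hypothesis $t_\theta(X,x) = \aleph_0$ lets us assume each value of $F$ is a countably infinite element of $\Omega^{\theta}_x$. Build the tree $\{A_s : s \in \omega^{<\omega}\}$ of partial $F$-plays, with $A_s$ enumerated bijectively as $\{x_{s \frown (n)} : n < \omega\}$. Fix a $\theta$-neighborhood base $\{N_\alpha : \alpha < \kappa\}$ at $x$. The key combinatorial fact, which replaces the nowhere-dense step of Theorem~\ref{cohenreals}, is this: for every $s$ and every finite $F_0 \subseteq \kappa$, the set $U = \bigcap_{\alpha \in F_0} N_\alpha$ is an open neighborhood of $x$, so $x \in \textsf{cl}_\theta(A_s)$ produces an $n$ with $x_{s \frown (n)} \in \overline{U} \subseteq \bigcap_{\alpha \in F_0}\overline{N_\alpha}$.

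To fuse these local choices into a single $F$-play lost by ONE, I would introduce the poset $P$ of pairs $(s, F_0) \in \omega^{<\omega} \times [\kappa]^{<\omega}$, ordered by declaring $(s_1, F_1) \le (s_2, F_2)$ iff $s_1 \supseteq s_2$, $F_1 \supseteq F_2$, and $x_{s_1 \restriction (i+1)} \in \overline{N_\alpha}$ for every $i$ with $|s_2| \le i < |s_1|$ and every $\alpha \in F_2$. Conditions sharing the same $s$-component are pairwise compatible by taking the union of their $F_0$-components, so $P$ is $\sigma$-centered. The sets $D_\alpha = \{(s, F_0) : \alpha \in F_0\}$ are trivially dense, and the extension sets $E_n = \{(s, F_0) : |s| \ge n\}$ are dense by the key fact, applied one level at a time. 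Bell's theorem and $\kappa < \mathfrak{p}$ then furnish a filter $G$ on $P$ meeting all $\kappa + \aleph_0$ of these dense sets; its union of $s$-components defines $f \in \omega^\omega$, and for each $\alpha$, fixing $(s_\alpha, F_\alpha) \in G \cap D_\alpha$ forces $x_{f \restriction (i+1)} \in \overline{N_\alpha}$ for all $i \ge |s_\alpha|$, so the $F$-play along $f$ is won by TWO.

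The implication $(2) \Rightarrow (3)$ is the standard constant-strategy argument from the earlier theorems. For $(3) \Rightarrow (1)$, I would take $X \subseteq \reals$ of cardinality $\kappa$, verify via Lemma~\ref{lem:tightness} and Corollary~\ref{cor:cocountablechi} that $(\textsf{C}_p(X), \tau_c)$ at $\mathbf{0}$ satisfies $t_\theta = \aleph_0$ and $\chi_\theta \le \kappa$, deduce from (3) that it has $\sone(\Omega^{\theta}_{\mathbf{0}}, \Gamma^{\theta}_{\mathbf{0}})$, transfer back to $(\textsf{C}_p(X), \tau)$ using Corollary~\ref{cor:FU}, and invoke the Gerlits--Nagy/Sakai equivalence together with the classical Galvin--Miller theorem (every set of reals of size $\kappa$ is a $\gamma$-set iff $\kappa < \mathfrak{p}$) to conclude $\kappa < \mathfrak{p}$. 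The main obstacle is the verification that $E_n$ is dense in $P$: this is the only step where the topological hypotheses on $(X,\tau)$ really enter, and it rests on the inclusion $\overline{\bigcap_{\alpha \in F_0} N_\alpha} \subseteq \bigcap_{\alpha \in F_0} \overline{N_\alpha}$ to let finitely many $\theta$-convergence constraints be satisfied simultaneously by a single extension of $s$.
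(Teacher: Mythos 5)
Your proposal is correct, but the comparison here is necessarily one-sided: the paper never prints a proof of Theorem \ref{gammastuff} — the authors state that the proofs of Theorems \ref{cohenstuff} through \ref{gammastuff} are ``with some adjustments very similar'' to those of Theorems \ref{cohenreals} and \ref{dominatingreals1} and leave the details to the reader. Your outer structure is exactly the paper's declared template: $(2)\Rightarrow(3)$ by the constant strategy, and $(3)\Rightarrow(1)$ by feeding $(\textsf{C}_p(X),\tau_c)$ through Lemma \ref{lem:tightness}, Corollary \ref{cor:cocountablechi} and Corollary \ref{cor:FU}, then quoting the Gerlits--Nagy and Galvin--Miller results identifying $\mathfrak{p}$ as the least cardinality of a set of reals without the $\gamma$-property (one attribution quibble: the relevant $C_p$-equivalence is due to Gerlits--Nagy, not Sakai, whose theorem is the $\sone(\Omega,\Omega)$ case used in Theorem \ref{cohenreals}). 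Where you necessarily depart from the two model proofs is the kernel of $(1)\Rightarrow(2)$: the covering-by-nowhere-dense-sets argument for $\textsf{cov}(\mathcal{M})$ and the Menger/Hurewicz argument for $\mathfrak{d}$ have no direct analogue for $\mathfrak{p}$, and your replacement --- the $\sigma$-centered poset of pairs $(s,F_0)$ plus Bell's theorem --- is the standard and, as far as I can tell, intended adjustment. The details check out: the ordering is transitive, the pieces $P_s=\{(s,F_0):F_0\in[\kappa]^{<\omega}\}$ are centered, density of $E_n$ follows from $x\in\textsf{cl}_{\theta}(A_s)$ applied to $\bigcap_{\alpha\in F_0}N_\alpha$ together with $\overline{\bigcap_{\alpha\in F_0} N_\alpha}\subseteq\bigcap_{\alpha\in F_0}\overline{N_\alpha}$, and the generic branch yields TWO-selections eventually inside each $\overline{N_\alpha}$, hence $\theta$-converging to $x$ because $N_\alpha\subseteq\overline{U}$ forces $\overline{N_\alpha}\subseteq\overline{U}$. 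Two small points you should make explicit in a final write-up: first, the paper's definition of a $\theta$-neighborhood base allows members not containing $x$, so pass to the subfamily of members containing $x$ before forming $\bigcap_{\alpha\in F_0}N_\alpha$; second, the reduction to countable moves by ONE should be justified the way the paper justifies its ``strongly $\theta$-dense'' reductions, namely by shrinking the values of ONE's strategy via $t_{\theta}(X,x)=\aleph_0$ to obtain an auxiliary strategy any play lost by which is also a lost play of the original.
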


\section{When does player TWO win $\gone(\mathcal{A},\mathcal{B})$?}

Our results in the earlier sections address combinatorial conditions equivalent to player ONE of the game in question not having a winning strategy. Establishing similar combinatorial conditions equivalent to player TWO of these games having a winning strategy would provide useful criteria for deciding whether a given instance of one of these games is determined (i.e., a player has a winning strategy), and for identifying the undetermined instances of these games. The following theorem is a model of the type of result we are aiming at.

\begin{theorem}\cite{COC6}\label{thm:coc6}\rm\; For a $\T_3$ space $X$ the following are equivalent:
\begin{enumerate}
\item TWO has a winning strategy in the game $\gone(\Dense,\Dense)$
\item $\pi(X)=\aleph_0$
\end{enumerate}
\end{theorem}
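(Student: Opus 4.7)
The implication $(2) \Rightarrow (1)$ is the straightforward half. Enumerate a countable $\pi$-base $\mathcal{B} = \{B_n : n < \omega\}$ of $X$. Define a strategy $F$ for TWO by the rule that at inning $n$, given ONE's dense set $D_n$, TWO selects any point $x_n \in D_n \cap B_n$; this intersection is nonempty because $D_n$ is dense in $X$ and $B_n$ is a nonempty open set. The sequence $\{x_n : n < \omega\}$ is dense in $X$: for any nonempty open $U \subseteq X$, some $B_n$ satisfies $B_n \subseteq U$, whence $x_n \in B_n \subseteq U$. Thus $F$ is a winning strategy, and (2) yields (1).

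For $(1) \Rightarrow (2)$ I would argue by contraposition: assume $\pi(X) > \aleph_0$ and let $F$ be any strategy for TWO. My goal is to exhibit a play $(D_n)_{n < \omega}$ of dense sets whose pick-sequence $\{F(D_0,\ldots,D_n) : n < \omega\}$ is not dense in $X$. I would recursively build a countable tree $\{D_s : s \in \omega^{<\omega}\}$ of dense sets rooted at $D_\emptyset = X$, with children $D_{s^\frown n}$ obtained by removing finite collections of previously produced $F$-outputs (the results remain dense because $X$ has no isolated points). Applying $F$ along each finite branch yields countably many points $x_s = F(D_{s\restriction 1}, \ldots, D_s)$; by $T_3$-regularity I would choose a nonempty open $V_s$ about each $x_s$, producing a countable family $\mathcal{V} = \{V_s : s \in \omega^{<\omega}\}$. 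Since $\pi(X) > \aleph_0$, $\mathcal{V}$ cannot be a $\pi$-base, so some nonempty open $U \subseteq X$ contains no $V_s$. I would then thread a branch $\sigma \in \omega^\omega$ through the tree so that each $x_{\sigma \restriction n}$ lies in a $V_{\sigma \restriction n}$ disjoint from $U$; the play along $\sigma$ then defeats $F$.

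The main obstacle is the branch-threading step: the tree must be arranged so that from any partial play $(D_{\sigma \restriction 1}, \ldots, D_{\sigma \restriction n})$ some child extension $D_{(\sigma\restriction n)^\frown m}$ forces $F$'s next response outside $U$. This will require an interleaved choice of the canonical dense sets $D_s$ and the neighborhoods $V_s$, using $T_3$-regularity to separate $F$'s outputs from $U$ and the no-isolated-points assumption to keep each $D_s$ dense after finite deletions of points. I expect the construction to mirror standard $\pi$-base extraction arguments from selection-principle theory, the essential combinatorial input being that a winning play must meet every nonempty open set, so any consistently avoided open set $U$ becomes a genuine obstruction to $F$.
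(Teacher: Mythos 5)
Your $(2)\Rightarrow(1)$ direction is correct and matches the easy half of the paper's argument (the paper actually derives Theorem \ref{thm:coc6} from the more general Theorem \ref{Separable}, whose implication $(1)\Rightarrow(2)$ is a groupable refinement of your enumeration trick). The problem is your $(1)\Rightarrow(2)$ direction, which has a genuine gap that cannot be closed without changing the approach.

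The first, fatal, issue is that your tree only allows ONE to play dense sets of the form $X\setminus K$ with $K$ a finite set of previously produced $F$-outputs. Against plays of that restricted form, TWO can win on \emph{any} separable space, regardless of $\pi$-weight. Concretely, take $X=[0,1]^{\omega_1}$, which is $\T_3$, separable, has no isolated points, and has $\pi(X)=\omega_1$. Fix a countable dense set $D_0=\{d_k:k<\omega\}$ and let $F$ respond to $(D_1,\dots,D_n)$ with the least-indexed element of $D_0\cap D_n$ not yet played (and arbitrarily if none exists). Along every branch of your tree the excluded finite sets consist of already-played points, so $F$ simply enumerates $D_0$ in order, and every branch produces the dense set $D_0$. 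Hence your tree contains no play defeating this $F$, and your method cannot certify that $F$ fails to be winning. The moral is that ONE must be allowed to play dense sets specifically engineered to exclude the points TWO would otherwise pick; cofinite modifications are far too weak.

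The second issue is the logic of the threading step, which you correctly identify as the obstacle but which is not repairable as set up. You choose neighborhoods $V_s$ of the picks $x_s$ first, and then obtain $U$ witnessing that $\{V_s\}$ is not a $\pi$-base; but $V_s\nsubseteq U$ implies neither $V_s\cap U=\emptyset$ nor $x_s\notin U$, so nothing forces TWO's responses away from $U$. The missing idea is the ``fullness'' lemma that drives the paper's proof of Theorem \ref{Separable} (Claim 1 there): for every finite sequence $(D_1,\dots,D_n)$ of dense sets there is a nonempty open set $U$ such that \emph{every} $x\in U$ is realized as $x=\sigma(D_1,\dots,D_n,D)$ for some dense $D$; otherwise one picks from each nonempty open set $W$ a non-realizable point $x_W$, and the dense set $E=\{x_W: W \mbox{ open nonempty}\}$ gives the contradiction $\sigma(D_1,\dots,D_n,E)\in E$. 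With this lemma the countable candidate $\pi$-base consists of the fullness sets $U_\mu$ attached to tree nodes (not neighborhoods of picks); separability (automatic here, since TWO's winning plays produce countable dense sets) supplies countable dense $C_\mu\subseteq U_\mu$ whose points are realized by dense sets, and if some open $V$ witnessed failure of the $\pi$-base property, then each $U_\mu\setminus\overline{V}$ would be nonempty open, so ONE could thread a branch choosing realizable picks in $C_\mu\setminus\overline{V}$, producing a $\sigma$-play whose selections avoid $V$ --- contradicting that $\sigma$ wins. This realizability mechanism, letting ONE dictate where TWO's next response lands, is exactly what your construction lacks; supplying it turns your sketch into the paper's proof.
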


In light of Lemma \ref{relatetheta} we obtain directly from Theorem \ref{thm:coc6} the following extension:
\begin{theorem}\label{specialtheta}
Let $(X,\tau)$ be a $T_3$-space in which each nonempty open set is uncountable. Then the following are equivalent for the space $(X,\tau_c)$:
\begin{enumerate}
\item TWO has a winning strategy in the game $\gone(\Dense_{\theta},\Dense_{\theta})$
\item $\pi_{\theta}(X)=\aleph_0$
\end{enumerate}
\end{theorem}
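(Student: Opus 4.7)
The plan is to observe that the game $\gone(\Dense_{\theta},\Dense_{\theta})$ played on $(X,\tau_c)$ is literally the same game as $\gone(\Dense,\Dense)$ played on $(X,\tau)$, after which the theorem reduces to a direct application of Theorem \ref{thm:coc6} combined with Corollary \ref{cor:cocountablepibase}.

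First I would invoke Lemma \ref{lemma:density}: since $(X,\tau)$ is $\textsf{T}_3$ with every nonempty open set uncountable, a subset $C\subseteq X$ is $\theta$-dense in $(X,\tau_c)$ if and only if $C$ is dense in $(X,\tau)$. Consequently, the family $\Dense_{\theta}$ computed in $(X,\tau_c)$ coincides with the family $\Dense$ computed in $(X,\tau)$. This means the two games have the same sets of legal moves for ONE (elements of $\Dense_{\theta}=\Dense$), the same moves for TWO (individual points of those sets), and the same winning condition (that the sequence of responses lie in $\Dense_{\theta}=\Dense$). Thus TWO has a winning strategy in $\gone(\Dense_{\theta},\Dense_{\theta})$ on $(X,\tau_c)$ if and only if TWO has a winning strategy in $\gone(\Dense,\Dense)$ on $(X,\tau)$.

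Next, because $(X,\tau)$ is a $\textsf{T}_3$ space, Theorem \ref{thm:coc6} applies and yields that TWO has a winning strategy in $\gone(\Dense,\Dense)$ on $(X,\tau)$ if and only if $\pi(X,\tau) = \aleph_0$. Finally, Corollary \ref{cor:cocountablepibase} furnishes the identity $\pi_{\theta}(X,\tau_c) = \pi(X,\tau)$, so $\pi(X,\tau) = \aleph_0$ if and only if $\pi_{\theta}(X,\tau_c) = \aleph_0$. Chaining these three equivalences gives exactly the statement of Theorem \ref{specialtheta}.

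There is no real obstacle here: every step is a direct appeal to a previously established result. The only technical point to verify is that the blanket hypothesis that each nonempty $\tau$-open set is uncountable is precisely what is needed to apply both Lemma \ref{lemma:density} and Corollary \ref{cor:cocountablepibase}, and this hypothesis is included in the statement of the theorem.
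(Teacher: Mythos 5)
Your proposal is correct and is essentially the paper's own argument: the paper derives the theorem ``directly from Theorem \ref{thm:coc6} in light of Lemma \ref{relatetheta}'', and Lemma \ref{relatetheta} encodes exactly the translation you perform (the $\theta$-closure in $\tau_c$ of any set coincides with its $\tau$-closure, hence $\theta$-dense sets of $(X,\tau_c)$ are the $\tau$-dense sets). Your version merely makes the reduction explicit by citing Lemma \ref{lemma:density} for the identification of the games and Corollary \ref{cor:cocountablepibase} for the identification $\pi_{\theta}(X,\tau_c)=\pi(X,\tau)$, which is a faithful, slightly more detailed rendering of the same proof.
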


\bigskip

It is not clear to what extent the equivalences of Theorem \ref{specialtheta} holds for general non-regular topological spaces. 
The following theorem improves Proposition 2.4 of \cite{CasDiMaio}, as well as Theorem \ref{thm:coc6} from \cite{COC6} (Theorem \ref{thm:coc6} follows from Theorem \ref{Separable} by adding the hypothesis that the space $X$ is a $T_3$-space). 
\begin{theorem}\label{Separable} \rm Let $X$ be a topological space which has no finite nonempty open sets. Consider the following statements:
\begin{enumerate}
\item[(1)] $\pi_{\theta}(X) = \aleph_0$. 
\item[(2)] TWO has a winning strategy in the game $\gone(\Dense_{\theta},\Dense^{gp}_{\theta})$
\item[(3)] TWO has a winning strategy in the game $\gone(\Dense_{\theta},\Dense_{\theta})$
\item[(4)] TWO has a winning strategy in the game $\gone(\Dense,\Dense_\theta)$
\end{enumerate} 
 If for each $\theta$-dense set $D$ and each nonempty open set $U$ the set $\overline{U}\cap D$ is infinite, then (1) implies (2); (2) implies (3); 
(3) implies(4) and
if $X$ is separable, then also (4) implies (1).
\end{theorem}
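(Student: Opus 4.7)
For the two monotonicity-driven implications the proof is immediate. For $(2)\Rightarrow(3)$ observe that by definition every groupably $\theta$-dense set is $\theta$-dense, so any winning strategy for TWO in $\gone(\Dense_\theta,\Dense^{gp}_\theta)$ is without change a winning strategy in $\gone(\Dense_\theta,\Dense_\theta)$. For $(3)\Rightarrow(4)$ the inclusion $\overline{A}\subseteq\textsf{cl}_\theta(A)$ yields $\Dense\subseteq\Dense_\theta$, so a strategy winning $\gone(\Dense_\theta,\Dense_\theta)$ continues to win when ONE is restricted to moves from the subclass $\Dense$.

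For $(1)\Rightarrow(2)$ I would fix a countable $\theta$-$\pi$-base $\{B_k:k<\omega\}$ and partition $\omega$ into consecutive blocks $I_k=\{i(k,0),\ldots,i(k,k)\}$ of length $k+1$. TWO's strategy, when faced with ONE's move $O_n\in\Dense_\theta$ at inning $n=i(k,j)$, picks a point $t_n\in O_n\cap\overline{B_j}$ distinct from all previously chosen responses. Such a $t_n$ exists because the standing hypothesis renders $O_n\cap\overline{B_j}$ infinite. Setting $C_k=\{t_n:n\in I_k\}$ partitions $C=\bigcup_k C_k$ into finite blocks; for any nonempty open $U$, selecting $j$ with $B_j\subseteq\overline{U}$ gives $t_{i(k,j)}\in\overline{B_j}\subseteq\overline{U}$ for every $k\geq j$, witnessing $C\in\Dense^{gp}_\theta$.

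The main obstacle is $(4)\Rightarrow(1)$. I would proceed directly: given $X$ separable with countable dense $D=\{d_n:n<\omega\}$ and a winning strategy $\sigma$ for TWO in $\gone(\Dense,\Dense_\theta)$, construct a countable $\theta$-$\pi$-base $\mathcal{V}$. Because $X$ has no finite nonempty open set, every cofinite subset $D\setminus F$ (with $F\in[D]^{<\omega}$) remains dense, so one obtains a countable tree $\mathcal{T}$ of partial $\sigma$-plays in which ONE only uses moves of this form; at each node $\vec{F}\in\mathcal{T}$ the strategy $\sigma$ emits a point $s_{\vec{F}}\in D$, yielding a countable set $S=\{s_{\vec{F}}:\vec{F}\in\mathcal{T}\}\subseteq D$ of responses. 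The plan is to use $S$, together with the countable collection $\{D\setminus F:F\in[D]^{<\omega}\}$ and the winning property of $\sigma$, to define $\mathcal{V}$ and to verify it is a $\theta$-$\pi$-base. The delicate step, and the place where all three standing hypotheses (separability, no finite nonempty open set, and infinitude of $\overline{U}\cap D'$ for every $\theta$-dense $D'$) must be jointly invoked, is establishing that for every nonempty open $U$ some $V\in\mathcal{V}$ satisfies $V\subseteq\overline{U}$. My intended device is, for a given $U$, to build adaptively a branch of $\mathcal{T}$ along which ONE plays cofinite modifications of $D$ chosen to force $\sigma$'s responses on that branch into $\overline{U}$ (using that $\overline{U}\cap D'$ is infinite for each $\theta$-dense $D'$ that $\sigma$'s responses visit), and then to read off from the constructed branch a member of $\mathcal{V}$ lying inside $\overline{U}$. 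This is precisely where the argument must diverge from the proof of Theorem \ref{thm:coc6}, which handles the $T_3$ case via regularity and standard point-open game techniques that are unavailable here.
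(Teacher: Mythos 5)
Your treatments of $(1)\Rightarrow(2)$, $(2)\Rightarrow(3)$ and $(3)\Rightarrow(4)$ are correct, and the first is essentially the paper's own argument: the paper uses blocks delimited by triangular numbers, within which TWO visits $\overline{F_1},\dots,\overline{F_{n+1}}$, where you use blocks $I_k$ of length $k+1$; the mechanism and the verification are identical.

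The genuine gap is in $(4)\Rightarrow(1)$, and it is a missing idea rather than missing detail. A $\theta$-$\pi$-base must consist of \emph{open sets}, but every object your construction produces is a point: ONE's moves are the cofinite sets $D\setminus F$, and $\sigma$'s responses to such moves are points of the countable set $D$. Your adaptive device (remove every response falling outside $\overline{U}$ and play on) does show that along such a branch $\sigma$ must eventually answer inside $\overline{U}$ --- otherwise the completed play would produce a set $T$ with $\overline{U}\cap T=\emptyset$, which is not $\theta$-dense, contradicting that $\sigma$ wins --- but this only re-verifies that TWO's outputs are $\theta$-dense; it yields a point of $\overline{U}$, never an open $V\subseteq\overline{U}$, and you never actually define $\mathcal{V}$. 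Nor can a suitable $\mathcal{V}$ be read off from this data: the set of all possible $\sigma$-responses to moves of your restricted form is a subset of $D$, hence countable, and in general contains no nonempty open set.

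The idea you are missing is the paper's Claim 1: for every finite sequence $(D_1,\dots,D_n)$ of dense sets there is a nonempty open set $U$ such that \emph{every} $x\in U$ equals $\sigma(D_1,\dots,D_n,D)$ for some dense $D$. (Otherwise choose in each nonempty open $U$ a point $x_U$ that is not a possible response; the set $E$ of all such $x_U$ is dense, yet $\sigma(D_1,\dots,D_n,E)$ must lie in $E$, a contradiction.) These open sets $U_\mu$, attached to the nodes $\mu\in{}^{<\omega}\omega$ of a tree of partial $\sigma$-plays built using separability (at each node take a countable dense subset $C_\mu$ of $U_\mu$ and, for each of its points, a dense set witnessing it as a response), form the countable family; the proof that it is a $\theta$-$\pi$-base is then precisely your branch argument run inside this tree: if some $V$ had $U_\mu\not\subseteq\overline{V}$ for all $\mu$, one could steer a play whose responses all avoid $\overline{V}$, defeating $\sigma$. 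Note that the proof of Claim 1 is exactly where ONE must be allowed to play \emph{arbitrary} dense sets --- the diagonal set $E$ is not a cofinite subset of $D$ and need not even be contained in $D$ --- so your restriction of ONE to $\{D\setminus F: F\in [D]^{<\omega}\}$ forecloses the construction. Your closing remark is also off the mark: the paper does not diverge from the technique behind Theorem \ref{thm:coc6}; it adapts that very strategy-diagonalization, with closures $\overline{V}$ replacing $V$ at the appropriate points, and no use of regularity.
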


\begin{proof} {\flushleft{\underline{$(1)\Rightarrow(2)$:}}}
 Let $\mathcal{F}$ be a countable $\theta-\pi$-base of $X$. Fix an initial enumeration $(F_n:n<\infty)$ of $\mathcal{F}$. Then define a new enumeration $(G_n:n<\omega)$ of $\mathcal{F}$ such that if $T_n$ denotes the $n$-th triangular number\footnote{For convenience we declare $T_0 = 0$.}, then $(G_{T_n+1},\cdots G_{T_{n+1}})$ is $(F_1,\cdots,F_{n+1})$ for each $n$. 

Now player TWO's strategy is as follows: When in the $n$-th inning ONE chooses the $\theta$-dense subset $D_n$ of $X$, then TWO responds with a point $x_n\in D_n\cap \overline{G}_n\setminus \{x_j:j<n\}$. This is possible since $D_n\cap\overline{G}_n$ is infinite. To show that this strategy is winning for TWO, it suffices to show that the set $E=\{x_n:n<\infty\}$ is groupably $\theta$-dense: For each $n$, define $E_n = \{x_j: T_{n-1} < j\le T_n\}$. The finite sets $E_n$ are disjoint from each other. Let $U$ be a nonempty open subset of $X$. We must show that for all but finitely many $n$, $\overline{U}\cap E_n\neq\emptyset$. Choose the least $n$ with $F_n\subseteq\overline{U}$. As $F_n\subseteq \overline{U}$, it follows that each $x_m$ selected from $\overline{F}_n$ is an element of $\overline{U}\cap E$. Thus, for each $m\ge n$, $\overline{U}\cap E_m$ is nonempty. 

{\flushleft{\underline{$(2)\Rightarrow(3)$:}}} 
This implication is easy to prove.

{\flushleft{\underline{$(3)\Rightarrow(4)$:}}} This implication follows directly from the fact that $\mathfrak{D} \subseteq \mathfrak{D}_{\theta}$.

{\flushleft{\underline{$(4)\Rightarrow(1)$:}}} Now we are assuming that $X$ is a separable space. Let $\sigma$ be a winning strategy for TWO in the game $\gone(\mathfrak{D},\mathfrak{D}_{\theta})$.

{\flushleft{\bf Claim 1:}} For every sequence $(D_1,\cdots,D_n)$ of dense sets there is a nonempty open set $U$ such that for every $x\in U$ there is $D\in{\frak D}$ such that $x=\sigma(D_1,\cdots,D_n,D)$.\\
{\tt Proof of Claim 1: }
Suppose the contrary. Let $(D_1,\cdots, D_n)$ witness that. For each nonempty open set $U\subset X$ choose $x_U\in U$
so that for any dense $D\subset X$, $x_U\neq\sigma(D_1,\cdots,D_n,D)$.
Put $E=\{x_U:$ $U$ a nonempty open set of $X\}$. Then $E$ is dense in $X$ and $\sigma(D_1,\cdots, D_n,E)\in E$ contradicting the selection of elements of $E$. This proves Claim 1.

Start with the empty sequence in $^{<\omega}{\frak D}$. Choose a nonempty open set $U_\emptyset\neq\emptyset$ as Claim 1. Since $X$ is separable, choose a countable set $C_\emptyset\subset U_\emptyset$ dense in $U_\emptyset$, say $C_\emptyset=(x_{(n)};n<\infty)$. For each $n$ choose a dense $D_{(n)}\subset X$ such that $x_{(n)}=\sigma(D_{(n)})$.

By Claim 1 choose for each $n<\infty$ a nonempty open set $U_{(n)}\subset X$ such that for very $x\in U_{(n)}$ there is $D\in{\frak D}$ such that $x=\sigma(D_{(n)},D)$. In each $U_{(n)}$ choose a countable dense $C_{(n)}\subset U_{(n)}$, and enumerate it as $\{x_{(n,k)}:k<\infty\}$. For each $(n,k)$ choose $D_{(n,k)}\in{\frak D}$ with $x_{(n,k)}=\sigma(D_{(n)}, D_{(n.k)})$. 

In general, with $x_\mu, U_\mu, C_\mu, D_\mu$ defined for all $\mu$ of length at most $k$ in $^{<\omega}\omega$, consider any such $\mu = (n_1,\cdots,n_k)$. 
As $(D_{(n_1)},\cdots, D_{(n_1,\cdots,n_k)})$ is a finite sequence of dense subsets of $X$, choose by Claim 1 a nonempty open set $U_\mu$ such that for each $x\in U_\mu$ there exists $D\in{\frak D}$ such that $x=\sigma(D_{(n_1)},\cdots, D_{(n_1,\cdots, n_k)},D)$. Choose a countable dense set $C_\mu\subset U_\mu$, and enumerate it as $\{x_{\mu\frown(n)}:n<\infty\}$. For each $n\in\omega$ choose a $D_{\mu\frown(n)}\in{\frak D}$ such that  $x_{\mu\frown(n)}=\sigma(D_{(n_1)}, \cdots, D_{(n_1,\cdots,n_k)}, D_{\mu\frown(n)})$.
This defines $C_\nu, x_\nu, U_\nu, D_\nu$ for all $\nu$ of length $k+1$ in $^{<\omega}\omega$.

{\flushleft{\bf Claim 2:}} $\{U_\mu:\mu\in\;^{<\omega}\omega\}$ is a $\theta$-$\pi$-base for $X$.\\
{\tt Proof of Claim 2: }
If not, choose a nonempty open set $V$ such that for all $\mu$, ${U_\mu}\nsubseteq\overline{V}$. Thus, for all $\mu$, $U_\mu\setminus\overline{V}$ is a nonempty open subset of $U_{\mu}$. 
Since $C_\emptyset$ is dense in $U_\emptyset$ choose $x_{(n_1)}\in C_\emptyset\setminus\overline{V}$, and since $C_{(n_1)}$ is dense in $U_{(n_1)}$ choose $x_{(n_1,n_2)}\in C_{(n_1)}\setminus\overline{V}$, and so on.

Then $D_{(n_1)}, x_{(n_1)}, D_{(n_1,n_2)}, x_{(n_1,n_2)}, \cdots$ is a $\sigma$-play of $\gone({\frak D},{\frak D}_\theta)$, but $\overline{V}\cap\{x_{(n_1)}, x_{(n_1,n_2)},\cdots\}=\emptyset$, so that TWO's selection is not $\theta$-dense. This  contradicts the fact that $\sigma$ is a winning strategy for TWO in the game $\gone(\mathfrak{D},\;\mathfrak{D}_{\theta})$. 
\end{proof}

It is not clear whether the condition that the space in question is separable, used in the proof of the implication $(4)\Rightarrow (1)$, is in general superfluous. There certainly are non-separable spaces in which TWO has a winning strategy in the game $\gone(\mathfrak{D}_{\theta},\mathfrak{D}^{gp}_{\theta})$:\\
For consider the space $(\reals,\,\tau_c)$.
For $U\in\tau_c$, the $\tau_c$ interior of the $\tau_c$ closure of $U$ coincides with the standard interior of the standard closure of $U$. It follows that the set of standard open intervals with rational endpoints is a countable $\theta$-$\pi$-base of $(\reals,\; \tau_c)$. For each $\tau_c$-open set $U$ and each $\theta$-dense set $D$, the intersection $D\cap \overline{U}$ is infinite.

By Theorem \ref{Separable}, TWO has a winning strategy in the game $\gone(\mathfrak{D}_{\theta},\mathfrak{D}^{gp}_{\theta})$ in the space $(\reals,\tau_c)$.

\bigskip
\bigskip

\section{Acknowledgements}

The research for the results reported in this paper partially occurred while Dr. Pansera was visiting the Department of Mathematics at Boise State University. The Department's hospitality and support during this visit is gratefully acknowledged.



\end{document}